\documentclass[11pt,leqno]{article}
\usepackage[utf8]{inputenc}
\usepackage{amsmath,amssymb,amsthm,latexsym}
\usepackage{indentfirst}
\usepackage{amsmath}
\newtheorem{definition}{\bf \large Definition}[section]
\newtheorem{theorem}{\bf \large Theorem}[section]
\newtheorem{PROPOSITION}{\bf \large Proposition}[section]
\newtheorem{corollary}{\bf \large Corollary}[section]
\newtheorem{lemma}{\bf \large Lemma}[section]

\newtheorem{remark}{\bf \large Remark}[section]

\newtheorem{conjecture}{\bf \large CPE Conjecture}

\title{Rigidity of Critical Point Metrics under some Ricci curvature constraints}
\author {\normalsize {Tongzhu Li, ~~Junlong Yu}\\
\small{Department of Mathematics, Beijing institute of
technology,}\\
\small{Beijing, 100081, China.}\\
\small{E-mail:~litz@bit.edu.cn,~~~3120245998@bit.edu.cn.}}

\date{}

\begin{document}
\maketitle
\begin{abstract}
A critical point metric is a critical point of the total scalar curvature functional
restricted to the space of constant scalar curvature metrics  on a closed
manifold with unit volume. It was conjectured in 1980's that every critical point metric must be Einstein.
In this paper, we prove that this conjecture is true if   the norm of the traceless   Ricci operator $|\widetilde{Ric}|$ is constant.
For $3$-dimensional case, we prove that the conjecture is  true,
 if  the traceless   Ricci operator  satisfies $tr((\widetilde{Ric})^3)\geq -\frac{R}{12}|\widetilde{Ric}|^2$, where $R$ denotes the scalar curvature.
\end{abstract}

\medskip\noindent
{\bf 2020 Mathematics Subject Classification:}  53C25, 53C20, 53C21.
\par\noindent {\bf Key words:} CPE Conjecture, Critical point metric, Einstein metric, Traceless   Ricci operator.

\section{Introduction}
The classic method for understanding constant curvature metric is to  investigate the properties of the critical point of  the total scalar curvature functional under appropriate  constraints.
Let $(M^n,g)$ be a connected closed(compact without boundary) orientable  smooth $n$-dimensional Riemannian manifold, and $\mathfrak{M_1}$ be the space of all smooth metrics with unit volume on $M^n$. The scalar curvature $R$ can be considered as a nonlinear map $R: \mathfrak{M_1} \rightarrow C^\infty (M^n).$
The well-known Hilbert-Einstein functional is defined as
\begin{equation*}
F(g) = \int_{M^n} R_g dv_g.
\end{equation*}
where $R_g$ is the scalar curvature of the metric $g\in \mathfrak{M_1}$ and $dv_g$ denotes the volume form of
$g$. It is well known that the formal $L^2$-adjoint of the linearization of the scalar
curvature operator $R_g$ at $g$ is defined as
$$\mathfrak{R}^*_g(f)=-(\triangle f)g+Hess_g(f)-fRic_g,$$
where $f$ is a smooth function on $M^n$, and $\triangle, Hess_g$, and $Ric_g$ denote  the Laplacian, the Hessian operator, and the Ricci
curvature tensor on $M^n$, respectively.
Let $$\widetilde{Ric}_g=Ric_g-\frac{R_g}{n}g$$ the traceless part of Ricci tensor,
then the Euler-Lagrange equation  of $F(g)$ ( see \cite{Besse})  is given by
\begin{equation}\label{euler}
\mathfrak{R}^*_g(f)=\widetilde{Ric}_g
\end{equation}

Taking the trace in (\ref{euler}), we obtain
$$\triangle f+\frac{R}{n-1}f=0.$$
Thus the nonconstant function $f$ is an eigenfunction of the Laplacian and then $R$ is positive constant.

Clearly the equation (\ref{euler}) can be rewritten as
\begin{equation}\label{euler1}
\nabla^2 f =(1+f)\widetilde{Ric}_g  - \frac{R}{n(n-1)}fg.
\end{equation}

\begin{definition}
A critical point  metric (or \emph{CPE metric}) is a three-tuple $(M^n, g,f), (n\geq 3)$, where $(M^n, g)$ is a closed (compact without boundary), oriented $n$-dimensional Riemannian manifold with constant scalar curvature, and $f: M^n\to \mathbb{R}$ is a nonconstant smooth function satisfying
equation (\ref{euler}) (or \ref{euler1}). Such a function $f$ is called a potential function.
\end{definition}

Obviously CPE metric is Einstein  when $f=0$.
Moreover, the existence of a non constant solution is only known in the round
sphere for some height function. In 1987, A.Besse proposed a conjecture in the book (p.128 in \cite{Besse}):
\begin{conjecture}
 CPE metrics are Einstein.
\end{conjecture}
Moreover, Besse showed the conjecture is true, if $$\frac{R_g}{n-1} \not\in Spec(- \Delta_g).$$ Otherwise, he conjectured the CPE metric should be a round sphere, since it was the only case known back then.
Equation (\ref{euler}) or (\ref{euler1}) is usually referred as \emph{Critical Point Equation} (\emph{CPE}) in the study of \emph{CPE Conjecture}.

As an corollary of \emph{CPE Conjecture}, $(M^n,g)$ has to be a round sphere in this case by the well-known \emph{Obata's Theorem} (\cite{Obata}):

\begin{theorem}[M.Obata \cite{Obata}]
For $n \geq 2$, let $(M^n,g)$ be a connected complete Riemannian manifold. Suppose there is a smooth function $f \not\equiv 0$ solves the following equation
\begin{equation}
\nabla^2 f + f g = 0.
\end{equation}
Then $(M,g)$ is isometric to the unit round sphere $\mathbb{S}^n$.
\end{theorem}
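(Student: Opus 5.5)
The proof of Obata's theorem is a classical argument. I would follow it, splitting it into a local part (the Hessian equation forces the metric to be a warped product $dr^2+\sin^2 r\, g_{\mathbb{S}^{n-1}}$ around a critical point of $f$) and a global part (completeness forces this warped product to close up into the round sphere).

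\emph{Normalization and critical points.} The first step uses $\nabla^2 f=-fg$ to show that $|\nabla f|^2+f^2$ is constant. Indeed, $\nabla(|\nabla f|^2+f^2)=2\nabla^2 f(\nabla f,\cdot)+2f\,df=-2f\,df+2f\,df=0$. Rescaling $f$, I may assume $|\nabla f|^2+f^2\equiv 1$. Along any unit-speed geodesic $\gamma$, the function $h(t)=f(\gamma(t))$ satisfies $h''+h=0$, since $h''=\nabla^2f(\dot\gamma,\dot\gamma)=-h$. Because $M$ is complete, $h$ is defined for all $t$, so it is a trigonometric function of period $2\pi$. I would then show that $f$ attains the value $1$. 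If $\sup f=1$ is not attained, take any point $p$ and the geodesic from $p$ in the direction $\nabla f(p)/|\nabla f(p)|$ (when $\nabla f(p)\neq0$). Then $h(0)=f(p)$ and $h'(0)=|\nabla f(p)|=\sqrt{1-f(p)^2}$, so $h(t)=\cos(t-t_0)$ with $t_0=\arccos f(p)$. Hence $f(\gamma(t_0))=1$, and the maximum is attained at some point $p$ with $f(p)=1$ and $\nabla f(p)=0$. (If $\nabla f(p)=0$ then $f(p)=\pm1$ already; replacing $f$ by $-f$ if needed, the same conclusion holds.)

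\emph{Distance function and local structure.} Fix $p$ with $f(p)=1$ and let $r=d(p,\cdot)$. Along every unit-speed geodesic from $p$ we have $h(0)=1$ and $h'(0)=0$, so $f=\cos r$ wherever $r$ is defined along minimizing geodesics; since every point is joined to $p$ by a minimizing geodesic, $f=\cos r$ on all of $M$. In particular $r\le\pi$. The set $\{f=-1\}$ consists of points at distance $\pi$, and since $f=-1$ forces $r=\pi$, $\operatorname{diam}M\le\pi$. On $\{0<r<\pi\}$, $\nabla f=-\sin r\,\nabla r$, and the equation gives
\[
\nabla^2 r=\frac{\cos r}{\sin r}\,\bigl(g-dr\otimes dr\bigr).
\]
This is exactly the Hessian of the distance function on the round sphere. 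Writing $g=dr^2+g_r$ in geodesic polar coordinates and using $\mathcal{L}_{\partial_r}g_r=2\nabla^2 r$, I get $\partial_r g_r=2\cot r\, g_r$. Since $g_r/r^2\to g_{\mathbb{S}^{n-1}}$ as $r\to0$ in normal coordinates, this gives $g_r=\sin^2 r\, g_{\mathbb{S}^{n-1}}$. So $\exp_p$, restricted to the open ball of radius $\pi$ in $T_pM$, pulls $g$ back to the round metric.

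\emph{Global step (main obstacle).} The step I expect to need the most care is showing that no cut point occurs before distance $\pi$, so that the conclusion is global. By the computation above, $\exp_p$ has no conjugate points in $B_\pi(0)$, because the pulled-back metric is round. To rule out geodesic loops at $p$ before $\pi$, I would use that $f=\cos r$ is smooth. Where cut points occur, $r$ fails to be smooth; but $r=\arccos f$ is smooth wherever $f\neq\pm1$, and $f\ne\pm1$ when $0<r<\pi$. Hence $r$ is smooth on $\{0<r<\pi\}$, and the cut locus of $p$ lies in $\{r=\pi\}=\{f=-1\}$. Since $f=-1$ is a nondegenerate minimum ($\nabla^2f=g$ there), this set is discrete, so by connectedness it is a single point $q$. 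Then $\exp_p$ is a diffeomorphism from $B_\pi(0)$ onto $M\setminus\{q\}$ and is an isometry from the round punctured sphere. Adding the point $q$ back identifies $(M,g)$ isometrically with the unit sphere $\mathbb{S}^n$. For $n\ge2$ the sphere $\mathbb{S}^{n-1}$ is connected, so the argument applies uniformly in all dimensions covered by the statement.
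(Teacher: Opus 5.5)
The paper does not prove this theorem; it quotes it from Obata and uses it as a black box. Your argument therefore has to stand on its own. Its architecture is the classical one, and most steps are correct: the normalization $|\nabla f|^2+f^2\equiv 1$, finding a point $p$ with $f(p)=1$, $f=\cos r$ via Hopf--Rinow, the formula for $\nabla^2 r$, and the polar ODE. There is, however, a genuine gap at ``In particular $r\le\pi$.''

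Nothing in $f=\cos r$ bounds $r$ from above, since $\cos$ is defined on all of $[0,\infty)$. The justification ``$f=-1$ forces $r=\pi$'' already assumes $r\le\pi$. What your smoothness argument actually proves is weaker. On the open set $\{0<r<\pi\}$ the identity $r=\arccos f$ holds automatically, so $r$ is smooth there. Hence every cut point of $p$ has $r\ge\pi$, and $\exp_p$ maps $B_\pi(0)$ diffeomorphically onto $\{r<\pi\}$. This is also what justifies your polar coordinates, so it should come before the round-metric computation. The opposite statement, that no geodesic from $p$ stays minimizing beyond length $\pi$, is never shown. Yet you need it for three things: $\mathrm{diam}\,M\le\pi$ and compactness; $\mathrm{Cut}(p)\subset\{r=\pi\}$; and $\{r=\pi\}=\exp_p(\pi\,\mathbb{S}^{n-1})$, which is the connected set your ``by connectedness'' implicitly relies on. Discreteness of $\{f=-1\}$ alone does not give a single point.

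The missing step is short, and there are two ways to supply it.
\begin{enumerate}
\item Suppose a minimizing unit-speed geodesic $\gamma$ from $p$ had length $L>\pi$. Then $q=\gamma(\pi)$ is an interior point, hence not a cut point, so $r$ is smooth near $q$ with $\nabla r=\dot\gamma$. Differentiating $f=\cos r$ twice gives $\nabla^2 f(q)=dr\otimes dr$, which has rank one. The equation gives $\nabla^2 f(q)=g$, a contradiction since $n\ge 2$.
\item Use your own computation. The tensor $\exp_p^*g$ is smooth on all of $T_pM$ and equals $dt^2+\sin^2 t\,g_{\mathbb{S}^{n-1}}$ on $B_\pi(0)$. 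By continuity, $d\exp_p$ annihilates the tangent spaces of the sphere of radius $\pi$. So each $\exp_p(\pi v)$ is conjugate to $p$, and no geodesic from $p$ minimizes past length $\pi$. Moreover, $\exp_p(\pi\,\mathbb{S}^{n-1})$ is a single point $q$, because $\mathbb{S}^{n-1}$ is connected for $n\ge 2$. This makes the discreteness argument unnecessary.
\end{enumerate}

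Finally, ``adding $q$ back'' needs a sentence on smoothness at $q$. The extended map $\mathbb{S}^n\to M$ is a distance-preserving bijection, since removing one point does not change the length metric when $n\ge 2$, so Myers--Steenrod applies. Alternatively, repeat the construction from $q$ using $-f$.
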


Now we can give a refined version of \emph{CPE Conjecture}:
\begin{conjecture}
A \emph{CPE metric} is spherical.
\end{conjecture}

It is clear that $(M^n, g)$ is Einstein if it admits a trivial solution $f\equiv -1$. Other CPE metrics with constant function $f$ are
Ricci flat metrics. $g$ is isometric to a round sphere metric if it is an Einstein CPE metric with a non-constant function $f$. Hence
Conjecture 1.2 really says that a CPE metric with a non-constant solution $f$ to (\ref{euler1}) is isometric to a round sphere metric.

After being proposed, many attempts had been tried to solve this conjecture. The conjecture was verified with various
additional presumptions. The first result known now is due to J. Lafontaine. He proved \emph{CPE Conjecture} is true for a  locally conformally flat \emph{CPE metric} (cf. \cite{Lafontaine}, \cite{Q-Y}). In particular, J. Chang, S. Hwang and G. Yun did a series study on \emph{CPE metrics} (\cite{C-H}-\cite{Hwang_3}), which gave us fundamental interpretations on the geometric structures of \emph{CPE metrics}. For example, S. Hwang first discovered that whether $\min_{x \in M^n} f(x) = -1 $  plays a very essential role in telling a \emph{CPE metric} is spherical (\cite{Hwang_1}). Besides posing presumptions on $f$, they also achieved many partial results when introducing additional assumptions on curvatures. In \cite{C-H-Y_1}, they showed that a \emph{CPE metric} has to be spherical if the Ricci tensor is parallel. In 2011, they proved a \emph{CPE metric} of harmonic curvature must be spherical by a very careful examination on the geometric structure of the level set $\{x\in M : f(x) = -1\}$ with the assistance of some geometric equalities. As a corollary, the conjecture is verified for locally conformally flat \emph{CPE metric} without assuming it is vacuum static (\cite{C-H-Y_3}).

Recently, A.Barros and E.Ribeiro Jr verified \emph{CPE Conjecture} for a four dimensional half locally conformally flat \emph{CPE metric} (\cite{B-R}).  J. Qing and W. Yuan showed a CPE metric has to be spherical, if it is Bach flat or even weaker presumptions if dimension is three (\cite{Q-Y}). In \cite{bal-0}, H. Baltazar, A. Barros, R. Batista and E. Viana showed a CPE metric has to be spherical, if it has zero radial Weyl curvature (i.e., $i_{\nabla f}W=0$). When the dimension $n=3$, Baltazar \cite{Bal} proved that the CPE conjecture is true for three-dimensional manifolds with
nonnegative sectional curvature. In \cite{He}, H. Y. He proved that the CPE conjecture is true for three-dimensional manifolds with
nonnegative Ricci curvature. For more references on CPE
metrics, see \cite{bal-1}, \cite{F_Y}, \cite{C-H}, \cite{C-H-Y_1}, \cite{C-H-Y_2},  \cite{C-H-Y_3-1}, \cite{C-H-Y_3}, \cite{bl} and references therein.

We note that \emph{CPE Conjecture} is true if and only if the traceless   Ricci operator $\widetilde{Ric}=0$. In this paper,
we consider CPE Conjecture under  some conditions related to the traceless   Ricci operators $\widetilde{Ric}$. Our results are as follows,

\begin{theorem}\label{th-1}
Let $(M^n, g, f) (n\geq3)$ be a closed, oriented, connected CPE metric. If there exists an integer $k (k\geq 0)$ such that
$$\int_{M^n}(1+f)^{2k}\widetilde{Ric}(\nabla f,\nabla f)dv_g\geq 0,$$
then \emph{CPE Conjecture} is true, \emph{i.e.} a \emph{CPE} metric has to be spherical.
\end{theorem}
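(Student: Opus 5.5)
The plan is to integrate the critical point equation (\ref{euler1}) against the traceless Ricci tensor, with a power of $(1+f)$ as weight. This produces an integral identity whose two sides have opposite signs under the hypothesis, which forces $(1+f)\widetilde{Ric}\equiv 0$.

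Write $T=\widetilde{Ric}$ and let $\varphi$ be a smooth function of one variable. Since $R$ is constant, the contracted Bianchi identity gives $\operatorname{div}T=\frac{n-2}{2n}\nabla R=0$. Pair (\ref{euler1}) with $\varphi(f)T$. Because $T$ is traceless, the term $-\frac{R}{n(n-1)}fg$ drops out, and we get
$$\int_{M^n}\varphi(f)\langle\nabla^2 f,T\rangle\,dv_g=\int_{M^n}\varphi(f)(1+f)|T|^2\,dv_g .$$
Integrating the left side by parts on the closed manifold, and using $\operatorname{div}T=0$ and the symmetry of $T$, gives
$$\int_{M^n}\varphi(f)\langle\nabla^2 f,T\rangle\,dv_g=-\int_{M^n}\varphi'(f)\,T(\nabla f,\nabla f)\,dv_g .$$
Now choose $\varphi(t)=(1+t)^{2k+1}$. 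This yields the key identity
$$\int_{M^n}(1+f)^{2k+2}|\widetilde{Ric}|^2\,dv_g=-(2k+1)\int_{M^n}(1+f)^{2k}\,\widetilde{Ric}(\nabla f,\nabla f)\,dv_g .$$
The left side is nonnegative. By hypothesis the right side is nonpositive. Hence both vanish, so $(1+f)^{k+1}\widetilde{Ric}\equiv0$ on $M^n$.

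Next we pass from this to $\widetilde{Ric}\equiv 0$. The main point is that the set $\{f=-1\}$ has empty interior. Suppose $f\equiv-1$ on some open set. The trace equation $\Delta f+\frac{R}{n-1}f=0$ would then give $R=0$ there, hence everywhere since $R$ is constant. But $R$ is a positive constant for a nonconstant $f$, as noted after (\ref{euler}), so this is a contradiction. (Alternatively, one can use unique continuation for the eigenfunction $f+1$ of a shifted operator.) Therefore $\{f\neq-1\}$ is open and dense, $\widetilde{Ric}=0$ on it, and by continuity $\widetilde{Ric}\equiv0$ on $M^n$. So $g$ is Einstein.

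Finally, with $\widetilde{Ric}=0$, equation (\ref{euler1}) becomes $\nabla^2 f=-\frac{R}{n(n-1)}fg$ with $R>0$ and $f$ nonconstant. After rescaling the metric so that $\frac{R}{n(n-1)}=1$, Obata's Theorem shows that $(M^n,g)$ is isometric to a round sphere, so the CPE metric is spherical. The only delicate step is the density argument for $\{f\neq -1\}$. The integral identity itself is a routine integration by parts, once one notices that the odd power $2k+1$ in $\varphi$ produces exactly the weight $(1+f)^{2k}$ of the hypothesis and a positive coefficient $(2k+1)$.
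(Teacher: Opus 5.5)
Your proof is correct and is essentially the paper's argument. Integrating $\langle\nabla^2 f,(1+f)^{2k+1}\widetilde{Ric}\rangle$ by parts is the same computation as the paper's divergence of $(1+f)^{2k+1}\widetilde{Ric}(\nabla f,\cdot)$ in its first integral identity (with $k$ replaced by $2k+1$), and it gives the identical identity.

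You also supply two details the paper leaves implicit: that $\{f=-1\}$ has empty interior, so $(1+f)^{k+1}\widetilde{Ric}\equiv 0$ really forces $\widetilde{Ric}\equiv 0$, and the final appeal to Obata. Only your parenthetical unique-continuation aside is inaccurate, since $\Delta(f+1)+\frac{R}{n-1}(f+1)=\frac{R}{n-1}\neq 0$; your direct argument makes it unnecessary.
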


\begin{corollary}\label{co-1}
Let $(M^n, g, f) (n\geq 3)$ be a closed, oriented, connected CPE metric. If
$$\int_{M^n}\widetilde{Ric}(\nabla f,\nabla f)dv_g\geq 0,$$
then \emph{CPE Conjecture} is true, \emph{i.e.} a \emph{CPE} metric has to be spherical.
\end{corollary}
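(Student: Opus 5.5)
Corollary \ref{co-1} is the special case $k=0$ of Theorem \ref{th-1}, since $(1+f)^{0}=1$. The plan is therefore to prove Theorem \ref{th-1}, or at least its $k=0$ case, which is all the corollary needs. The idea is to show that the integral $\int_{M^n}(1+f)^{2k}\widetilde{Ric}(\nabla f,\nabla f)\,dv_g$ is always $\le 0$, with equality only when $\widetilde{Ric}\equiv 0$. The hypothesis then forces $\widetilde{Ric}=0$. An Einstein CPE metric with nonconstant $f$ satisfies $\nabla^2 f=-\frac{R}{n(n-1)}fg$, so Obata's Theorem (after rescaling) shows $g$ is spherical.

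\textbf{Step 1 (a divergence identity).} Using (\ref{euler1}), we get $\widetilde{Ric}(\nabla f,\cdot)=\frac{1}{1+f}\bigl(\nabla^2 f(\nabla f,\cdot)+\frac{R}{n(n-1)}f\,df\bigr)$ wherever $1+f\neq0$. This is awkward, so instead I will work with the divergence of $(1+f)^{2k+1}\widetilde{Ric}(\nabla f)$. The contracted Bianchi identity with constant $R$ gives $\operatorname{div}\widetilde{Ric}=0$. Hence
\begin{equation*}
\operatorname{div}\bigl((1+f)^{m}\widetilde{Ric}(\nabla f)\bigr)=m(1+f)^{m-1}\widetilde{Ric}(\nabla f,\nabla f)+(1+f)^m\langle \widetilde{Ric},\nabla^2 f\rangle .
\end{equation*}
By (\ref{euler1}), $\langle\widetilde{Ric},\nabla^2f\rangle=(1+f)|\widetilde{Ric}|^2$. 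Taking $m=2k+1$ and integrating over the closed manifold gives
\begin{equation*}
(2k+1)\int_{M^n}(1+f)^{2k}\widetilde{Ric}(\nabla f,\nabla f)\,dv_g=-\int_{M^n}(1+f)^{2k+2}|\widetilde{Ric}|^2\,dv_g\le 0 .
\end{equation*}

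\textbf{Step 2 (conclusion).} Combining Step 1 with the hypothesis gives $(1+f)^{2k+2}|\widetilde{Ric}|^2\equiv0$. So $\widetilde{Ric}=0$ on the open set $\{f\neq-1\}$.

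The main obstacle is showing that this set is dense. Suppose instead that $f\equiv-1$ on some open set. The trace equation $\Delta f+\frac{R}{n-1}f=0$ would then give $\frac{R}{n-1}=0$ there, contradicting $R>0$. So $\{f=-1\}$ has empty interior, and by continuity $\widetilde{Ric}\equiv0$ on $M^n$.

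Then (\ref{euler1}) becomes $\nabla^2 f=-\frac{R}{n(n-1)}fg$ with $f$ nonconstant. After scaling the metric so that $\frac{R}{n(n-1)}=1$, Obata's Theorem shows $(M^n,g)$ is a round sphere, which proves Theorem \ref{th-1} and hence Corollary \ref{co-1}.
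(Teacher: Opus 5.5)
Your proof is correct and is essentially the paper's argument. The divergence of $(1+f)^{2k+1}\widetilde{Ric}(\nabla f)$ is Proposition \ref{pr-1} with exponent $2k+1$; for $k=0$ it gives $\int_{M^n}\widetilde{Ric}(\nabla f,\nabla f)\,dv_g=-\int_{M^n}(1+f)^2|\widetilde{Ric}|^2\,dv_g$, so the hypothesis forces $\widetilde{Ric}=0$ and Obata's theorem then applies. Your observation that $\{f=-1\}$ has empty interior (from $\Delta f=-\frac{R}{n-1}f$ and $R>0$) correctly fills in the step from $(1+f)^{2}|\widetilde{Ric}|^2\equiv 0$ to $\widetilde{Ric}\equiv 0$, which the paper leaves implicit.
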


\begin{theorem}\label{th-2}
Let $(M^n, g, f) (n\geq 3)$ be a closed, oriented, connected CPE metric. If the norm of the traceless Ricci curvature is constant, that is
$$|\widetilde{Ric}|^2=constant,$$
then $\widetilde{Ric}=0$ and \emph{CPE Conjecture} is true, \emph{i.e.} a \emph{CPE} metric has to be spherical.
\end{theorem}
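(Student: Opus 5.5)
The plan is to get $\widetilde{Ric}=0$ directly from two integral identities that hold for every CPE metric, using only the equation (\ref{euler1}) and $\int_{M^n} f\,dv_g=0$. Theorem~\ref{th-1} should not be needed. Once $\widetilde{Ric}=0$, the metric is Einstein. Equation (\ref{euler1}) then reads $\nabla^2 f=-\frac{R}{n(n-1)}fg$ with $f$ nonconstant and $R>0$. Rescaling the metric so that $\frac{R}{n(n-1)}=1$, Obata's Theorem shows that $(M^n,g)$ is a round sphere.

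\textbf{Step 1.} Taking the trace of (\ref{euler}) gives $\triangle f=-\frac{R}{n-1}f$ with $R$ a positive constant. Integrating over the closed manifold $M^n$ gives
\begin{equation*}
\int_{M^n}f\,dv_g=0 .
\end{equation*}

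\textbf{Step 2.} Since $R$ is constant, the contracted second Bianchi identity gives $\mathrm{div}\,\widetilde{Ric}=\mathrm{div}\,Ric-\frac{1}{n}\nabla R=\frac12\nabla R=0$. Pair (\ref{euler1}) with $\widetilde{Ric}$. Because $\widetilde{Ric}$ is traceless, the term $-\frac{R}{n(n-1)}fg$ drops out, and
\begin{equation*}
\langle\nabla^2 f,\widetilde{Ric}\rangle=(1+f)|\widetilde{Ric}|^2 .
\end{equation*}
Integrate and move one derivative off $\nabla^2 f$:
\begin{equation*}
\int_{M^n}(1+f)|\widetilde{Ric}|^2dv_g=\int_{M^n}\langle\nabla^2 f,\widetilde{Ric}\rangle dv_g=-\int_{M^n}\langle\nabla f,\mathrm{div}\,\widetilde{Ric}\rangle dv_g=0 .
\end{equation*}
So every CPE metric satisfies $\int_{M^n}(1+f)|\widetilde{Ric}|^2dv_g=0$.

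\textbf{Step 3.} Now assume $|\widetilde{Ric}|^2=c$ is constant. By Step~2 and then Step~1,
\begin{equation*}
0=c\int_{M^n}(1+f)\,dv_g=c\,\mathrm{Vol}(M^n),
\end{equation*}
so $c=0$. Hence $\widetilde{Ric}=0$ and $g$ is Einstein, and the argument in the first paragraph shows $(M^n,g)$ is spherical.

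The only step needing care is the integration by parts in Step~2. It relies on $\mathrm{div}\,\widetilde{Ric}=0$, which in turn uses that $R$ is constant; this holds by the definition of a CPE metric. As a cross-check against Theorem~\ref{th-1}, integrating by parts once more gives $\int_{M^n}\widetilde{Ric}(\nabla f,\nabla f)\,dv_g=-\int_{M^n}f(1+f)|\widetilde{Ric}|^2dv_g$, so one could also route the proof through Corollary~\ref{co-1}. The direct argument above is shorter.
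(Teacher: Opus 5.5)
Your proof is correct and is essentially the paper's argument. The paper likewise combines $\int_{M^n}(1+f)|\widetilde{Ric}|^2dv_g=0$ (its (\ref{int-3})) with $\int_{M^n}f\,dv_g=0$. The only difference is that it obtains (\ref{int-3}) by subtracting (\ref{int-4}) (Proposition~\ref{pr-1} with $k=1$) from (\ref{int-2}), and the underlying vector fields $(1+f)\widetilde{Ric}(\nabla f,\cdot)$ and $f\widetilde{Ric}(\nabla f,\cdot)$ differ by exactly your $\widetilde{Ric}(\nabla f,\cdot)$, so your single integration by parts (Proposition~\ref{pr-1} with $k=0$) is the same computation done directly.

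Two small remarks. First, $\mathrm{div}\,Ric-\frac{1}{n}\nabla R=\frac{n-2}{2n}\nabla R$, not $\frac12\nabla R$; this is harmless since $\nabla R=0$. Second, your closing cross-check cannot literally invoke Corollary~\ref{co-1}. For $|\widetilde{Ric}|^2=c$ your second identity gives $\int_{M^n}\widetilde{Ric}(\nabla f,\nabla f)dv_g=-c\int_{M^n}f^2dv_g\le 0$, which is the wrong sign for the corollary's hypothesis. What does work is equating this with $-c\int_{M^n}(1+f)^2dv_g$ from (\ref{int-4}), which is precisely the paper's route.
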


When the dimension $n=3$,  we obtain the following theorems,
\begin{theorem}\label{th-3-1}
Let $(M^3, g, f)$ be a $3$-dimensional closed, oriented, connected CPE  metric. If
$$tr((\widetilde{Ric})^3)\geq \frac{-R}{12}|\widetilde{Ric}|^2,$$
then, $\widetilde{Ric}=0$  and $(M^3, g, f)$is isometric to a round sphere $\mathbb{S}^3$.
\end{theorem}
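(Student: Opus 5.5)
Write $T=\widetilde{Ric}$ and $\phi=1+f$, so that (\ref{euler1}) reads $\nabla^2 f=\phi T-\frac{R}{n(n-1)}fg$. In dimension three, $W=0$ and the full curvature tensor is determined by $T$ and $R$. The plan is to derive a Bochner-type integral identity for $|T|^2$ weighted by a power of $\phi$, so that the cubic term $tr(T^3)$ appears with a definite sign, and then use the hypothesis to force $T=0$.

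\emph{Step 1 (divergence of $\phi T$).} Since $R$ is constant, $\operatorname{div}T=0$. Taking the divergence of the CPE written as $\phi T=\nabla^2 f+\frac{R}{n(n-1)}fg$ gives
\[
\operatorname{div}(\phi T)=T(\nabla f)
\]
(up to a sign convention). Using the Ricci identity and $\Delta f=-\frac{R}{n-1}f$, this expresses $\operatorname{div}(\phi T)$ purely in terms of $T(\nabla f,\cdot)$.

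\emph{Step 2 (the cubic identity).} I then compute $\Delta|T|^2$ via the Weitzenböck formula for the symmetric Codazzi-like tensor. On $M^3$ the rough Laplacian of $T$ satisfies
\[
\tfrac12\Delta|T|^2=|\nabla T|^2+\langle T,\nabla^2 R\rangle+3\,tr(T^3)+\tfrac{R}{2}|T|^2-\langle T,\text{Cotton terms}\rangle .
\]
The $\langle T,\nabla^2R\rangle$ term vanishes since $R$ is constant. The failure of $T$ to be Codazzi is measured by the Cotton tensor $C$. For CPE metrics there is the known relation $\phi\,C=T\wedge\nabla f$-type terms, plus a multiple of $i_{\nabla f}W$, which is zero in dimension three. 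Multiplying the Bochner identity by $\phi^2$ and integrating by parts, the Cotton contributions should combine with Step 1 into $-\int \phi^2|C|^2$-type and $\int T(\nabla f,\nabla f)$-type terms. The target is an integral identity of the form
\[
\int_M \phi^2\Big(|\nabla T|^2+\ast|C|^2\Big)+\int_M\phi^2\Big(3\,tr(T^3)+\tfrac{R}{4}|T|^2\Big)= -c\int_M|T|^2|\nabla f|^2\quad\text{or}\quad c'\!\int_M T(\nabla f,\nabla f),
\]
with the constant $\frac{R}{12}$ of the hypothesis coming from $\frac{R}{4}/3$. Under $tr(T^3)\ge -\frac R{12}|T|^2$ the left side is nonnegative.

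\emph{Step 3 (conclusion).} If the right side is $\le 0$, every term vanishes, so $\phi T\nabla f=0$ and $\nabla T=0$ on the dense set $\{\phi\ne0\}$. The parallel-Ricci result of Chang--Hwang--Yun (\cite{C-H-Y_1}) then gives $T=0$. If instead the right side is $\int T(\nabla f,\nabla f)$, I deduce $\int T(\nabla f,\nabla f)\ge0$ and apply Theorem \ref{th-1} with $k=0$ (Corollary \ref{co-1}), which concludes spherically; Obata's theorem then yields $\mathbb S^3$.

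\emph{Main obstacle.} The hard part is Step 2: choosing the weight power of $\phi$ and tracking the integrations by parts so that the $R|T|^2$ coefficient comes out exactly to match $\frac{R}{12}$, and so that the remaining terms reduce to either a sign-definite quantity or $\int T(\nabla f,\nabla f)$. I would first try the weight $\phi^2$, and use the identity $\int\phi\,\langle T,\nabla^2 f\rangle=\int\phi^2|T|^2$ together with $\int|T|^2\phi\,\Delta f$ to convert the leftover terms.
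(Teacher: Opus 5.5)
\paragraph{There is a genuine gap: the central identity is never derived, and the weighted Bochner route does not yield it.}
Your Step 2 carries the whole proof but is only asserted. The target identity, and in particular $\frac{R}{12}=\frac{R}{4}/3$, is read off from the hypothesis rather than computed. If you actually do the $\phi^2$-weighted Bochner computation, it comes out with different coefficients and the wrong sign.

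Put $C_{ijk}=T_{ij,k}-T_{ik,j}$. By (\ref{cpe-4-1}), $\phi C_{ijk}=(T\nabla f)_j\delta_{ik}-(T\nabla f)_k\delta_{ij}+2(f_jT_{ik}-f_kT_{ij})$. Hence
\[
\phi^2|C|^2=8|T|^2|\nabla f|^2-12|T\nabla f|^2,\qquad \phi\sum_{ijk}f_kT_{ij}C_{ijk}=3|T\nabla f|^2-2|T|^2|\nabla f|^2 .
\]
Using the antisymmetry of $C$ in $j,k$, you have $\int\phi^2\langle T,\operatorname{div}C\rangle=-2\int\phi f_kT_{ij}C_{ijk}-\frac12\int\phi^2|C|^2$, and these two Cotton contributions cancel exactly. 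Combine this with your Weitzenböck formula $\frac12\Delta|T|^2=|\nabla T|^2+3\,\mathrm{tr}(T^3)+\frac R2|T|^2+\langle T,\operatorname{div}C\rangle$, with $\Delta f=-\frac R2 f$, and with $\int f\phi|T|^2=\int\phi^2|T|^2$ (Proposition \ref{pr-3}). You get
\[
\int_{M^3}\phi^2\Big(|\nabla T|^2+3\,\mathrm{tr}(T^3)+R|T|^2\Big)dv_g=\int_{M^3}|T|^2|\nabla f|^2dv_g .
\]
The right side is nonnegative. Under $\mathrm{tr}(T^3)\ge-\frac{R}{12}|T|^2$ both sides are merely $\ge0$, and nothing follows.

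The paper's own Bochner identity for $\bar R=\phi T$ (the second proposition of Section 3) has the same defect. Its gradient terms $6|T\nabla f|^2-\frac72|T|^2|\nabla f|^2$ are indefinite, which is why the paper uses it only for Theorem \ref{th-3-2}, under the extra assumption $\mathrm{tr}(T^3)\le0$. Your fallback branch has nothing behind it either, since the computation never produces a $\int T(\nabla f,\nabla f)$ term.

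\paragraph{The missing ingredient is a first-order identity.}
You need an identity relating $|T|^2|\nabla f|^2$ and $|T\nabla f|^2$ to $\phi^2\,\mathrm{tr}(T^3)$ with no $|\nabla T|^2$ at all. The paper obtains it by integrating the divergence of $Z=\phi\,T^2\nabla f$, i.e.\ $Z_i=\sum_{jk}\overset{\circ}{R}_{ij}\bar R_{jk}f_k$. It uses (\ref{ricci-2}) to exchange $\bar R_{jk,i}$ for $\bar R_{ij,k}$, then $\Delta f=-\frac R2f$ and $\int\phi|T|^2=0$, giving
\[
\int_{M^3}\Big[\tfrac32|T|^2|\nabla f|^2-2|T\nabla f|^2+\phi^2\big(\mathrm{tr}(T^3)+\tfrac{R}{12}|T|^2\big)\Big]dv_g=0 .
\]
This is where $\frac{R}{12}$ genuinely comes from.

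Since $T$ is traceless and symmetric in dimension three, its largest squared eigenvalue is at most $\frac23|T|^2$. So $|T\nabla f|^2\le\frac23|T|^2|\nabla f|^2$, and the gradient part is at least $\frac16|T|^2|\nabla f|^2$. Under the hypothesis the integrand is pointwise nonnegative, which forces $|T|^2|\nabla f|^2\equiv0$. Hence $T=0$ on $\{\nabla f\neq0\}$, which is dense by unique continuation for the eigenfunction $f$. No Bochner formula and no parallel-Ricci theorem are needed.

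Your Bochner identity becomes useful only once this first-order identity is available. Adding six times it to the first display gives $\int\phi^2\big(|\nabla T|^2+|C|^2+9\,\mathrm{tr}(T^3)+\frac32R|T|^2\big)dv_g=0$. This has exactly the shape you were aiming for. It would even give $\nabla T=0$, and hence $T=0$ by Theorem \ref{th-2}, under the weaker bound $\mathrm{tr}(T^3)\ge-\frac R6|T|^2$.
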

\begin{theorem}\label{th-4-1}
Let $(M^3, g, f)$ be a $3$-dimensional closed, oriented, connected CPE  metric. If
$$|\widetilde{Ric}|^2\leq \frac{R^2}{24},$$
then, $\widetilde{Ric}=0$  and $(M^3, g, f)$is isometric to a round sphere $\mathbb{S}^3$.
\end{theorem}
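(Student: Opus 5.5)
The plan is to deduce this statement directly from Theorem \ref{th-3-1}, using a pointwise algebraic inequality for traceless symmetric $3\times 3$ matrices. No further analysis of the critical point equation should be needed beyond what Theorem \ref{th-3-1} already supplies.

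\textbf{Step 1 (algebraic inequality).} Fix a point and let $\lambda_1,\lambda_2,\lambda_3$ be the eigenvalues of $\widetilde{Ric}$ there, so that $\sum\lambda_i=0$. The classical Okumura-type inequality for traceless symmetric matrices states, in dimension $n=3$,
$$\Big|\sum_i \lambda_i^3\Big|\le \frac{n-2}{\sqrt{n(n-1)}}\Big(\sum_i\lambda_i^2\Big)^{3/2}=\frac{1}{\sqrt6}\,|\widetilde{Ric}|^3 .$$
For $n=3$ this can also be checked by hand. Substitute $\lambda_3=-\lambda_1-\lambda_2$, so that $\sum\lambda_i^3=3\lambda_1\lambda_2\lambda_3$. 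Then maximize $\lambda_1\lambda_2\lambda_3$ on the sphere $\sum\lambda_i^2=1$ within the plane $\sum\lambda_i=0$ by Lagrange multipliers. The extremum occurs when two eigenvalues coincide, which gives the constant $1/\sqrt6$. In particular,
$$tr\big((\widetilde{Ric})^3\big)\ge -\frac{1}{\sqrt6}|\widetilde{Ric}|^3 .$$

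\textbf{Step 2 (use the hypothesis).} From the introduction, a CPE metric has constant positive scalar curvature $R$. The assumption $|\widetilde{Ric}|^2\le R^2/24$ is equivalent to $|\widetilde{Ric}|\le R/(2\sqrt6)$, and therefore
$$-\frac{1}{\sqrt6}|\widetilde{Ric}|^3=-\frac{1}{\sqrt6}|\widetilde{Ric}|\,|\widetilde{Ric}|^2\ge -\frac{1}{\sqrt6}\cdot\frac{R}{2\sqrt6}|\widetilde{Ric}|^2=-\frac{R}{12}|\widetilde{Ric}|^2 .$$
Combining this with Step 1 yields $tr((\widetilde{Ric})^3)\ge -\frac{R}{12}|\widetilde{Ric}|^2$ at every point of $M^3$.

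\textbf{Step 3 (conclude).} The pointwise inequality from Step 2 is exactly the hypothesis of Theorem \ref{th-3-1}. That theorem then gives $\widetilde{Ric}=0$ and shows that $(M^3,g,f)$ is isometric to a round $\mathbb{S}^3$.

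The only point requiring care is the sharp constant in Step 1, since the threshold $R^2/24$ is calibrated precisely so that it matches the $-R/12$ in Theorem \ref{th-3-1}. I would verify the constant with the two-equal-eigenvalues computation: taking $(1,1,-2)/\sqrt6$ gives $\sum\lambda_i^3=-6/6^{3/2}=-1/\sqrt6$, which confirms the constant. The substantive analytic work is already contained in Theorem \ref{th-3-1}.
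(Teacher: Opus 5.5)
Your proposal is correct and is essentially the paper's own proof. Lemma~\ref{le} with $n=3$ gives $tr((\widetilde{Ric})^3)\ge -\frac{1}{\sqrt6}|\widetilde{Ric}|^3$; the pinching $|\widetilde{Ric}|\le \frac{R}{2\sqrt6}$ (using $R>0$) turns this into $tr((\widetilde{Ric})^3)\ge -\frac{R}{12}|\widetilde{Ric}|^2$, and Theorem~\ref{th-3-1} concludes. Your Step 2 constants are the correct ones, whereas the paper's displayed chain has a typo ($\sqrt{12}$ where $\sqrt6$ is meant).
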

\begin{theorem}\label{th-3-2}
Let $(M^3, g, f)$ be a $3$-dimensional closed, oriented, connected CPE  metric. If
$$\frac{-5R}{24}|\widetilde{Ric}|^2\leq tr((\widetilde{Ric})^3)\leq 0,$$
then, $\widetilde{Ric}=0$  and $(M^3, g, f)$is isometric to a round sphere $\mathbb{S}^3$.
\end{theorem}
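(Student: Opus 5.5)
The plan is to reduce the statement to Corollary \ref{co-1} (equivalently Theorem \ref{th-1} with $k=0$). Concretely, we want to show that the two-sided pinching
$$-\tfrac{5R}{24}|\widetilde{Ric}|^2\le tr((\widetilde{Ric})^3)\le 0$$
forces $\int_{M^3}\widetilde{Ric}(\nabla f,\nabla f)\,dv_g\ge 0$. Corollary \ref{co-1} then gives $\widetilde{Ric}=0$, and Obata's theorem identifies $(M^3,g)$ with a round $\mathbb{S}^3$. Throughout we write $T=\widetilde{Ric}$.

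\textbf{Step 1: CPE identities.} These hold for any dimension but will be used with $n=3$.
\begin{itemize}
\item Since $R$ is constant, $\operatorname{div}T=0$.
\item Taking the divergence of (\ref{euler1}) and commuting derivatives gives $T(\nabla f)$ in terms of $\nabla f$, namely $(1+f)\,\operatorname{div}$-type terms equal $-\,T(\nabla f,\cdot)$ up to constants.
\item Contracting (\ref{euler1}) with $T$ gives $\langle \nabla^2 f, T\rangle=(1+f)|T|^2$. Integrating by parts with $\operatorname{div}T=0$ yields $\int (1+f)|T|^2=0$, hence $\int f|T|^2=-\int|T|^2$.
\item In dimension three, the Weyl tensor vanishes and the Cotton tensor satisfies $(1+f)C_{ijk}=$ (explicit terms in $T$ and $\nabla f$). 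This is the standard identity of Hwang and Qing--Yuan.
\end{itemize}

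\textbf{Step 2: Bochner formula for $|T|^2$.} In dimension three with $R$ constant, the Laplacian of the traceless Ricci tensor involves no Weyl term:
$$\tfrac12\Delta|T|^2=|\nabla T|^2+3\,tr(T^3)+\tfrac{R}{2}|T|^2+(\text{Cotton/divergence terms}),$$
where the extra terms come from $\nabla_k\nabla_i R_{jk}$ and are expressible through $C$.

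Multiply this formula by $(1+f)$ and by suitable powers of $f$, then integrate. The Cotton terms are converted, via the identity in Step 1, into integrals of $|C|^2$-type squares and of $T(\nabla f,\nabla f)$. Integrating $(1+f)\Delta|T|^2$ by parts moves the Laplacian onto $f$, where (\ref{euler1}) and $\Delta f=-\tfrac{R}{2}f$ convert it back into $f|T|^2$ terms.

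The goal is a single integral identity of the form
$$\int \phi(f)\big(a\,tr(T^3)+bR|T|^2\big)+\int(\text{nonnegative squares})=c\int T(\nabla f,\nabla f),$$
with explicit constants $a,b,c$ and $c>0$.

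\textbf{Step 3: use the hypothesis.} This step requires the two pinching bounds $tr(T^3)\le0$ and $tr(T^3)\ge-\tfrac{5R}{24}|T|^2$ to be used separately, on the regions where the weight $\phi(f)$ has opposite signs. The natural split is $\{f>-1\}$ versus $\{f<-1\}$, or equivalently the splitting into the weights $1$ and $f$.

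The constant $\tfrac{5}{24}$ should be exactly the threshold at which the combination, together with the relation $\int f|T|^2=-\int|T|^2$ from Step 1, becomes nonnegative. This gives $\int T(\nabla f,\nabla f)\ge0$, and Corollary \ref{co-1} finishes the proof.

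\textbf{Main obstacle.} The hard part is Step 2: choosing the weights so that the Cotton and $|\nabla T|^2$ contributions all have a favorable sign, and so that the coefficients of $tr(T^3)$ and $R|T|^2$ line up with the constants $0$ and $-\tfrac{5}{24}$ in the hypothesis.

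\textbf{Fallback.} If the weight $(1+f)$ alone does not work, first try a combination $\alpha\cdot 1+\beta f$, using the identity $\int(1+f)|T|^2=0$ to adjust the $|T|^2$ coefficient freely. If that still fails, pass to the weight $(1+f)^{2}$ and invoke Theorem \ref{th-1} with $k=1$ instead of $k=0$.
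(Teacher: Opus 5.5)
Your proposal has a genuine gap: Steps 2--3 carry all of the proof, and they are never carried out. No weight is fixed, no identity is computed, and neither the constants $a,b,c$ nor the claim that $\tfrac{5}{24}$ is ``exactly the threshold'' is derived.

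The reduction to Corollary \ref{co-1} gives no leverage. Proposition \ref{pr-1} with $k=1$ says $\int_{M^3}T(\nabla f,\nabla f)\,dv_g=-\int_{M^3}(1+f)^2|T|^2\,dv_g\le 0$, so your target is literally equivalent to $T\equiv 0$. The same holds for the fallback via Theorem \ref{th-1} with $k=1$, by Proposition \ref{pr-1} with $k=3$.

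The computation also does not take the shape you expect. With weight $1+f$ the Cotton terms have no sign, so the usable weight is $(1+f)^2\ge 0$, and no splitting into $\{f>-1\}$, $\{f<-1\}$ occurs. Write $C_{ijk}=\overset{\circ}{R}_{ij,k}-\overset{\circ}{R}_{ik,j}$ and $|T(\nabla f)|^2=\sum_{ijk}\overset{\circ}{R}_{ij}\overset{\circ}{R}_{jk}f_if_k$. Then (\ref{cpe-4-1}) gives $(1+f)^2|C|^2=8|T|^2|\nabla f|^2-12|T(\nabla f)|^2$, and your Bochner step with weight $(1+f)^2$ yields
\[
\int_{M^3}(1+f)^2\big(|\nabla T|^2+R|T|^2+3\,tr(T^3)\big)\,dv_g=\int_{M^3}|T|^2|\nabla f|^2\,dv_g ,
\]
which is (\ref{ricci-3}) minus three times (\ref{in-4-1}). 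Its right-hand side has the wrong sign, so it must be combined with (\ref{in-4-1}). Every such combination leaves first-order terms $\lambda|T|^2|\nabla f|^2-\mu|T(\nabla f)|^2$. These are quadratic in $T$, are not multiples of $T(\nabla f,\nabla f)$, and must be controlled pointwise by eigenvalue algebra. That is the missing step. The paper takes (\ref{ricci-3}), uses the lower bound to get $\tfrac{5R}{4}|T|^2+6\,tr(T^3)\ge 0$, and invokes $tr(T^3)\le 0$ to claim (\ref{int3-1}), namely $\tfrac72|T|^2|\nabla f|^2-6|T(\nabla f)|^2\ge 0$.

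Be warned that (\ref{int3-1}) is false as an algebraic inequality, so copying that step will not close your gap. For eigenvalues $a_1\le a_2\le a_3$ of $T$, the condition $tr(T^3)=3a_1a_2a_3\le 0$ forces $a_1\le 0\le a_2\le a_3$. Hence $|a_1|=a_2+a_3\ge a_3$, and the normalization $\max_i a_i^2=a_3^2$ used there is not available. Concretely, take $T=\epsilon\,\mathrm{diag}(-2,1,1)$ and $\nabla f\parallel e_1$. Then $\tfrac72|T|^2|\nabla f|^2-6|T(\nabla f)|^2=-3\epsilon^2|\nabla f|^2<0$, while both pinching bounds hold for small $\epsilon>0$.

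What the two identities do give cleanly is six times (\ref{in-4-1}) plus the display above:
\[
\int_{M^3}(1+f)^2\big(|\nabla T|^2+|C|^2+\tfrac{3R}{2}|T|^2+9\,tr(T^3)\big)\,dv_g=0 .
\]
Under $tr(T^3)\ge-\tfrac{R}{6}|T|^2$ this forces $\nabla T=0$ off $\{f=-1\}$. That set has empty interior because $\Delta f=-\tfrac R2 f$. Then $|T|$ is constant, and $T=0$ by Theorem \ref{th-2}. But $\tfrac16<\tfrac{5}{24}$, and the upper bound $tr(T^3)\le 0$ plays no role. Reaching the stated constant therefore needs an ingredient that neither your outline nor the paper's argument supplies.
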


We organize the paper as follows. In section 2, we established integral identities about the traceless   Ricci operator $\widetilde{Ric}$ on $n$-dimensional CPE manifolds, and then used them to prove our theorems \ref{th-1} and \ref{th-2}.  In section 3, by the particularity of three-dimensional Riemannian manifold, we established  integral identities about the traceless   Ricci operator  on $3$-dimensional CPE manifolds, and then used them to prove our theorem \ref{th-3-1}, theorem \ref{th-4-1} and theorem \ref{th-3-2}.

\par\noindent
\section{Integral identities on $n$-dimensional CPE manifold and proof of Theorems}
Let $(M^n,g) (n\geq 3)$ be an $n$-dimensional  orientable Riemannian manifold with the Levi-Civita connection $\nabla$. For any point $p\in M^n$, we  can choose $\{e_1,\cdots,e_n\}$ as a local orthonormal frame field at $p$.
 Let $R_{ijkl}, R_{ij}$ denote the coefficients
of the Riemannian curvature tensor
and the Ricci curvature tensor, respectively.  The decomposition of the Riemannian curvature tensor into irreducible components yields
\begin{equation}\label{cur1}
\begin{split}
R_{ijkl}&=W_{ijkl}+\frac{1}{n-2}\Big(R_{ik}\delta_{jl}+R_{jl}\delta_{ik}-R_{jk}\delta_{il}-R_{il}\delta_{jk}\Big)\\
&-\frac{R}{(n-1)(n-2)}(\delta_{ik}\delta_{jl}-\delta_{jk}\delta_{il}),
\end{split}
\end{equation}
where $W_{ijkl}$ are the components of the Weyl tensor and
$R:=tr(Ric)$ is the scalar curvature of $M^n$ (cf \cite{Koiso}).

The  traceless   Ricci operator $\widetilde{Ric}$ defined as
$\widetilde{Ric}=Ric-\frac{R}{n}g.$
Let  $\overset{\circ}{R}_{ij}=R_{ij}-\frac{R}{n}\delta_{ij}$ be the coefficients of  the traceless   Ricci operator $\widetilde{Ric}$,
and the equation (\ref{cur1})  can be rewritten as the following equation
\begin{equation}\label{cur1-0}
\begin{split}
R_{ijkl}&=W_{ijkl}+\frac{1}{n-2}\Big(\overset{\circ}{R}_{ik}\delta_{jl}
+\overset{\circ}{R}_{jl}\delta_{ik}-\overset{\circ}{R}_{jk}\delta_{il}-\overset{\circ}{R}_{il}\delta_{jk}\Big)\\
&+\frac{R}{n(n-1)}(\delta_{ik}\delta_{jl}-\delta_{jk}\delta_{il}),
\end{split}
\end{equation}

For a smooth function $f\in C^{\infty}(M^n)$,
Let
$$\nabla_{e_i}f=f_i, ~\nabla^2_{e_j,e_i}f=f_{ij}, \nabla^3_{e_i,e_j,e_k}f=f_{ijk}, \cdots,$$
then we have the following Ricci identities,
\begin{equation}\label{ricci}
f_{ijk}-f_{ikj}=\sum_mf_{m}R_{mijk},~~f_{ijkl}-f_{ijlk}=\sum_mf_{mj}R_{mikl}+\sum_mf_{mi}R_{mjkl}.
\end{equation}
In this section we  use
the following range of indices: $1\leq
i,j\leq n$.

Using the contracted second Bianchi identy, we have
\begin{equation}\label{sca}
\sum_jR_{ij,j}=\sum_j\nabla_jR_{ij}=\frac{1}{2}e_i(R)=\frac{1}{2}R_i,~~\sum_j\overset{\circ}{R}_{ij,j}=\frac{n-2}{2n}R_i.
\end{equation}

Now we assume that $(M^n, g)$ is a CPE metric with the potential function $f$, then
\begin{equation}\label{cpe-1}
f_{ij}=(1+f)\overset{\circ}{R}_{ij}-\frac{R}{n(n-1)}f\delta_{ij}.
\end{equation}

By taking the trace of (\ref{cpe-1}), we get
\begin{equation}\label{eig}
\Delta f=\frac{-R}{n-1}f.
\end{equation}
We assume that $f$ is not constant, thus $f$ is an eigenfunction of the Laplacian operator and the scalar curvature $R$  must be a positive constant.
The equation (\ref{sca}) reduces to
\begin{equation}\label{sca-1}
R_{ij,k}=\overset{\circ}{R}_{ij,k},~~\sum_jR_{ij,j}=0,~~\sum_j\overset{\circ}{R}_{ij,j}=0.
\end{equation}
From (\ref{cpe-1}),
\begin{equation}\label{cpe-2}
f_{ij,k}=(1+f)\overset{\circ}{R}_{ij,k}+(\overset{\circ}{R}_{ij}-\frac{R}{n(n-1)}\delta_{ij})f_k.
\end{equation}
Using the Ricci identity and (\ref{cpe-2}), we can get
\begin{equation}\label{cpe-4}
\begin{split}
&(1+f)\overset{\circ}{R}_{ij,k}-(1+f)\overset{\circ}{R}_{ik,j}\\
&=\sum_mf_mR_{mijk}+f_j\overset{\circ}{R}_{ik}-f_k\overset{\circ}{R}_{ij}+\frac{R}{n(n-1)}(f_k\delta_{ij}-f_j\delta_{ik}).
\end{split}
\end{equation}

Next we use the divergence theorem to establish some geometric integral identities. Then we use these integral identities to prove our Theorem \ref{th-1} and Theorem \ref{th-2}
\begin{PROPOSITION}\label{pr-1}
Let $(M^n, g, f) (n\geq 3)$ be a closed CPE metric, then
\begin{equation}\label{int-1}
\int_{M^n}\Big[k(1+f)^{k-1}\widetilde{Ric}(\nabla f,\nabla f)+(1+f)^{k+1}|\widetilde{Ric}|^2\Big]dv_g=0,
\end{equation}
where $k$ is a nonnegative integer $(k\geq 0)$.
\end{PROPOSITION}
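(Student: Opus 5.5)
Since $\sum_j\overset{\circ}{R}_{ij,j}=0$ by (\ref{sca-1}), the vector field $X$ with components $X_i=(1+f)^k\sum_j\overset{\circ}{R}_{ij}f_j$ has a divergence that is easy to compute. Integrating this divergence over the closed manifold $M^n$ and applying the divergence theorem gives zero, and that is the identity we want.

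The first step is to expand the divergence:
\begin{equation*}
\sum_i X_{i,i}=k(1+f)^{k-1}\sum_{i,j}\overset{\circ}{R}_{ij}f_if_j+(1+f)^k\sum_{i,j}\overset{\circ}{R}_{ij,i}f_j+(1+f)^k\sum_{i,j}\overset{\circ}{R}_{ij}f_{ij}.
\end{equation*}
The second term vanishes by (\ref{sca-1}) together with the symmetry of $\overset{\circ}{R}_{ij}$. For the third term, we substitute (\ref{cpe-1}). This gives
\begin{equation*}
\sum_{i,j}\overset{\circ}{R}_{ij}f_{ij}=(1+f)|\widetilde{Ric}|^2-\frac{Rf}{n(n-1)}\sum_i\overset{\circ}{R}_{ii}=(1+f)|\widetilde{Ric}|^2,
\end{equation*}
where the last equality uses that $\widetilde{Ric}$ is traceless. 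Therefore
\begin{equation*}
\sum_i X_{i,i}=k(1+f)^{k-1}\widetilde{Ric}(\nabla f,\nabla f)+(1+f)^{k+1}|\widetilde{Ric}|^2,
\end{equation*}
and integrating over $M^n$ gives (\ref{int-1}).

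The only point that needs care is the case $k=0$ and the term $(1+f)^{k-1}$. When $k=0$ this term carries the coefficient $k=0$, so we interpret it as absent. Equivalently, for $k=0$ we take $X_i=\sum_j\overset{\circ}{R}_{ij}f_j$ directly, and then no negative power of $1+f$ appears. For $k\geq1$, $(1+f)^{k-1}$ is a polynomial in $f$, so everything is smooth. No step here is a real obstacle: the key inputs are the divergence-free property $\sum_j\overset{\circ}{R}_{ij,j}=0$, which holds because $R$ is constant, and the tracelessness of $\overset{\circ}{R}$, which kills the $f\delta_{ij}$ term in the CPE equation.
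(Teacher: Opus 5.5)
Your proof is correct and is the same as the paper's: the paper uses the same vector field $Z=\sum_{ij}(1+f)^k\overset{\circ}{R}_{ij}f_ie_j$, drops the $\overset{\circ}{R}_{ij,i}$ term because $\widetilde{Ric}$ is divergence-free ($R$ constant), and substitutes the CPE equation, where tracelessness removes the $f\delta_{ij}$ part. Your remark on reading the $k=0$ term as absent is a clarification the paper leaves implicit.
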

\begin{proof}
For a nonnegative integer $k\geq 0$, define the vector field:
$$Z=\sum_{ij}(1+f)^k\overset{\circ}{R}_{ij}f_ie_j.$$
Using (\ref{sca-1}), and taking the divergence of $Z$,
\begin{equation*}
\begin{split}
div(Z)&=\sum_{ij}(1+f)^k\Big[\overset{\circ}{R}_{ij,i}f_i+\overset{\circ}{R}_{ij}f_{ij}\Big]+\sum_{ij}k(1+f)^{k-1}\overset{\circ}{R}_{ij}f_if_j\\
&=\sum_{ij}(1+f)^k\overset{\circ}{R}_{ij}f_{ij}+\sum_{ij}k(1+f)^{k-1}\overset{\circ}{R}_{ij}f_if_j\\
&=k(1+f)^{k-1}\widetilde{Ric}(\nabla f,\nabla f)+(1+f)^{k+1}|\widetilde{Ric}|^2.
\end{split}
\end{equation*}
Integrating the equation on both sides,  we complete the proof of Proposition \ref{pr-1}.
\end{proof}

\begin{PROPOSITION}\label{pr-2}
Let $(M^n, g, f)$ be a closed CPE metric, then
\begin{equation}\label{int-2}
\int_{M^n}\Big[\widetilde{Ric}(\nabla f,\nabla f)+(1+f)f|\widetilde{Ric}|^2\Big]dv_g=0.
\end{equation}
\end{PROPOSITION}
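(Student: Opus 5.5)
Apply Proposition \ref{pr-1} with $k=1$ and $k=0$, then take the difference. For $k=1$, (\ref{int-1}) gives
$$\int_{M^n}\Big[\widetilde{Ric}(\nabla f,\nabla f)+(1+f)^2|\widetilde{Ric}|^2\Big]dv_g=0,$$
and for $k=0$ it gives
$$\int_{M^n}(1+f)|\widetilde{Ric}|^2dv_g=0.$$
Since $(1+f)^2=(1+f)f+(1+f)$, subtracting the second identity from the first yields exactly (\ref{int-2}).

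The only thing to check is the case $k=0$ of Proposition \ref{pr-1}. There the vector field is $Z=\sum_{ij}\overset{\circ}{R}_{ij}f_ie_j$. The term $k(1+f)^{k-1}$ vanishes identically, so no negative power of $(1+f)$ appears. Hence $Z$ is smooth and its divergence is $\sum_{ij}\overset{\circ}{R}_{ij}f_{ij}=(1+f)|\widetilde{Ric}|^2$. This uses (\ref{sca-1}) and (\ref{cpe-1}), together with the tracelessness of $\overset{\circ}{R}_{ij}$, which kills the $\delta_{ij}$ term.

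As an alternative check, one can compute directly with $Z=\sum_{ij}(1+f)\overset{\circ}{R}_{ij}f_ie_j$. Then
$$div(Z)=\widetilde{Ric}(\nabla f,\nabla f)+(1+f)^2|\widetilde{Ric}|^2,$$
and one rewrites $(1+f)^2=(1+f)f+(1+f)$, using that $\int(1+f)|\widetilde{Ric}|^2dv_g=\int div\big(\widetilde{Ric}(\nabla f,\cdot)\big)dv_g=0$.

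No step is a real obstacle; the argument is pure bookkeeping of the two instances of (\ref{int-1}).
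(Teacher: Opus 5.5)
Your argument is correct and is essentially the paper's. The paper integrates the divergence of $Z=\sum_{ij} f\overset{\circ}{R}_{ij}f_ie_j$, which is exactly the difference of the $k=1$ and $k=0$ vector fields from Proposition \ref{pr-1}, so the two computations coincide by linearity of the divergence. The only organizational difference is that the paper afterwards derives (\ref{int-3}) by subtracting (\ref{int-2}) from the $k=1$ case, whereas you rightly observe that (\ref{int-3}) is already the $k=0$ case of (\ref{int-1}).
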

\begin{proof}
Define the vector field:
$$Z=\sum_{ij}f\overset{\circ}{R}_{ij}f_ie_j.$$
Using (\ref{sca-1}), and taking the divergence of $Z$,
\begin{equation*}
\begin{split}
div(Z)&=\sum_{ij}f\Big[\overset{\circ}{R}_{ij,i}f_i+\overset{\circ}{R}_{ij}f_{ij}\Big]+\sum_{ij}\overset{\circ}{R}_{ij}f_if_j\\
&=\sum_{ij}f\overset{\circ}{R}_{ij}f_{ij}+\sum_{ij}\overset{\circ}{R}_{ij}f_if_j\\
&=\widetilde{Ric}(\nabla f,\nabla f)+(1+f)f|\widetilde{Ric}|^2.
\end{split}
\end{equation*}
Integrating the equation on both sides,  we complete the proof of Proposition \ref{pr-2}.
\end{proof}

\begin{PROPOSITION}\label{pr-3}
Let $(M^n, g, f)$ be a closed CPE metric, then
\begin{equation}\label{int-3}
\int_{M^n}(1+f)|\widetilde{Ric}|^2dv_g=0.
\end{equation}
\end{PROPOSITION}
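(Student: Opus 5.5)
Proposition \ref{pr-3} follows by combining Propositions \ref{pr-1} and \ref{pr-2}. Taking $k=1$ in (\ref{int-1}) gives
\begin{equation*}
\int_{M^n}\Big[\widetilde{Ric}(\nabla f,\nabla f)+(1+f)^{2}|\widetilde{Ric}|^2\Big]dv_g=0,
\end{equation*}
while (\ref{int-2}) gives
\begin{equation*}
\int_{M^n}\Big[\widetilde{Ric}(\nabla f,\nabla f)+(1+f)f|\widetilde{Ric}|^2\Big]dv_g=0.
\end{equation*}
Subtracting the second identity from the first eliminates the term $\widetilde{Ric}(\nabla f,\nabla f)$. What remains is $\int_{M^n}(1+f)\big[(1+f)-f\big]|\widetilde{Ric}|^2dv_g=0$, which is exactly (\ref{int-3}).

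Alternatively, and more directly, one can take $k=0$ in Proposition \ref{pr-1}. Then the coefficient $k(1+f)^{k-1}$ vanishes and (\ref{int-1}) reads $\int_{M^n}(1+f)|\widetilde{Ric}|^2dv_g=0$. This amounts to taking the divergence of $Z=\sum_{ij}\overset{\circ}{R}_{ij}f_ie_j$. One uses $\sum_i\overset{\circ}{R}_{ij,i}=0$ from (\ref{sca-1}), substitutes (\ref{cpe-1}), and notes that the term $\frac{R}{n(n-1)}f\sum_i\overset{\circ}{R}_{ii}$ vanishes because $\widetilde{Ric}$ is traceless. The divergence is then $(1+f)|\widetilde{Ric}|^2$, and the divergence theorem on the closed manifold gives the claim.

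There is no real obstacle here. The only point to check is that the $k=0$ case of Proposition \ref{pr-1} is legitimate, i.e.\ that no factor $(1+f)^{-1}$ actually appears. It does not, since that term carries the coefficient $k$. Either route therefore finishes the proof.
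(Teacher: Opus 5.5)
Your first route is exactly the paper's proof: take $k=1$ in (\ref{int-1}) and subtract (\ref{int-2}); the paper subtracts in the opposite order, which makes no difference. Your $k=0$ alternative is also correct, and it is the same computation done in one step, since the difference of the vector fields used in Proposition \ref{pr-1} (with $k=1$) and Proposition \ref{pr-2} is precisely $\sum_{ij}\overset{\circ}{R}_{ij}f_ie_j$.
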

\begin{proof}
In equation (\ref{int-1}), taking $k=1$, we have
\begin{equation}\label{int-4}
\int_{M^n}\Big[\widetilde{Ric}(\nabla f,\nabla f)+(1+f)^2|\widetilde{Ric}|^2\Big]dv_g=0
\end{equation}
The equation (\ref{int-2}) minus the equation (\ref{int-4}), we obtain the equation (\ref{int-3}).
\end{proof}
\begin{remark}
The equation (\ref{int-3})  is also obtained in paper \cite{Hwang_1}.
\end{remark}

Now we prove Theorem \ref{th-1}. In (\ref{int-1}), taking integer $2k+1$, we have
\begin{equation}\label{int-1-0}
\int_{M^n}\Big[(2k+1)(1+f)^{2k}\widetilde{Ric}(\nabla f,\nabla f)+(1+f)^{2(k+1)}|\widetilde{Ric}|^2\Big]dv_g=0,
\end{equation}

If $\int_{M^n}(1+f)^{2k}\widetilde{Ric}(\nabla f,\nabla f)dv_g\geq0$, then from (\ref{int-1-0})
$$\int_{M^n}(1+f)^{2(k+1)}|\widetilde{Ric}|^2dv_g\leq 0,$$
which implies that $|\widetilde{Ric}|^2=0$. Thus the \emph{CPE} metric is Einstein and Theorem \ref{th-1} is proved.

In theorem \ref{th-1}, let $k=0$,  then we prove  Corollary \ref{co-1}.

Now we prove Theorem \ref{th-2}.
From (\ref{int-3}), we get
\begin{equation*}
0=\int_{M^n}(1+f)|\widetilde{Ric}|^2dv_g=|\widetilde{Ric}|^2\int_{M^n}(1+f)dv_g=|\widetilde{Ric}|^2\int_{M^n}dv_g,
\end{equation*}
Which implies that$|\widetilde{Ric}|=0$, Thus the \emph{CPE} metric is Einstein and Theorem \ref{th-2} is proved.

\par\noindent
\section{Integral identities on $3$-dimensional CPE manifold and proof of Theorems}

For $3$-dimensional Riemannian  manifold $(M^3, g)$, the Weyl curvature tensor  is always zero, thus the Ricci curvature determines the
full curvature tensor,
\begin{equation}\label{cur1-1}
R_{ijkl}=R_{ik}\delta_{jl}+R_{jl}\delta_{ik}-R_{jk}\delta_{il}-R_{il}\delta_{jk}
-\frac{R}{2}(\delta_{ik}\delta_{jl}-\delta_{jk}\delta_{il}).
\end{equation}
On $3$-dimensional CPE manifold, the CPE equation is given by
\begin{equation}\label{cpe-1-1}
f_{ij}=(1+f)\overset{\circ}{R}_{ij}-\frac{R}{6}f\delta_{ij}.
\end{equation}
Furthermore, we have the following equation,
\begin{equation}\label{cpe-2}
f_{ij,k}=(1+f)\overset{\circ}{R}_{ij,k}+(\overset{\circ}{R}_{ij}-\frac{R}{6}\delta_{ij})f_k.
\end{equation}

Form (\ref{cpe-4}),
\begin{equation}\label{cpe-4-1}
\begin{split}
(1+f)\overset{\circ}{R}_{ij,k}-(1+f)\overset{\circ}{R}_{ik,j}
=f_m\overset{\circ}{R}_{mj}\delta_{ik}-f_m\overset{\circ}{R}_{mk}\delta_{ij}
+2(f_j\overset{\circ}{R}_{ik}-f_k\overset{\circ}{R}_{ij}).
\end{split}
\end{equation}

Let $\overline{R}_{ij}=(1+f)\overset{\circ}{R}_{ij}$, then
$$\overline{R}_{ij,k}=(1+f)\overset{\circ}{R}_{ij,k}+f_k\overset{\circ}{R}_{ij}.$$
From (\ref{cpe-4-1}),  we obtain   the following equation,
\begin{equation}\label{ricci-2}
\overline{R}_{ij,k}-\overline{R}_{ik,j}
=\sum_mf_m\overset{\circ}{R}_{mj}\delta_{ik}-\sum_mf_m\overset{\circ}{R}_{mk}\delta_{ij}
+f_j\overset{\circ}{R}_{ik}-f_k\overset{\circ}{R}_{ij}.
\end{equation}
In this section we  use
the following range of indices: $1\leq
i,j\leq 3$.

Next, we  use the particularity of three-dimensional manifolds to establish more geometric integral identities.

\begin{PROPOSITION}\label{pr-4}
Let $(M^n, g, f)$ be a $3$-dimensional closed, oriented, connected CPE  metric, then
\begin{equation*}
\int_{M^n}\Big[\frac{3}{2}|\widetilde{Ric}|^2|\nabla f|^2
+\frac{R}{12}(1+f)^2|\widetilde{Ric}|^2
-2\sum_{ijk}\overset{\circ}{R}_{ij}\overset{\circ}{R}_{jk}f_if_k+(1+f)^2tr((\widetilde{Ric})^3)\Big]dv_g=0.
\end{equation*}
\end{PROPOSITION}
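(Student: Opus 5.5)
The plan is to contract the three-dimensional identity (\ref{ricci-2}) against the tensor $\overset{\circ}{R}_{ij}f_k$, integrate over $M^3$, and integrate the two derivative terms on the left by parts. The result should be an identity in $|\widetilde{Ric}|^2|\nabla f|^2$, $\sum\overset{\circ}{R}_{ij}\overset{\circ}{R}_{jk}f_if_k$, $(1+f)^2tr((\widetilde{Ric})^3)$ and $|\widetilde{Ric}|^2$ weighted by polynomials in $f$. The integral identities of Section 2 will then convert the weights into the form stated.

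\emph{Right-hand side.} Multiply (\ref{ricci-2}) by $\overset{\circ}{R}_{ij}f_k$ and sum over $i,j,k$. Using $\mathrm{tr}\,\widetilde{Ric}=0$, the four terms become, in order:
\begin{itemize}
\item the first term gives $\sum\overset{\circ}{R}_{ij}\overset{\circ}{R}_{jk}f_if_k$;
\item the second term vanishes;
\item the third term gives $\sum\overset{\circ}{R}_{ij}\overset{\circ}{R}_{jk}f_if_k$ again;
\item the fourth term gives $-|\widetilde{Ric}|^2|\nabla f|^2$.
\end{itemize}
So the right-hand side is $2\sum\overset{\circ}{R}_{ij}\overset{\circ}{R}_{jk}f_if_k-|\widetilde{Ric}|^2|\nabla f|^2$.

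\emph{Left-hand side, first term.} Write $\overline{R}_{ij,k}\overset{\circ}{R}_{ij}f_k=\frac{1+f}{2}\langle\nabla|\widetilde{Ric}|^2,\nabla f\rangle+|\widetilde{Ric}|^2|\nabla f|^2$. Integrating by parts and using (\ref{eig}) with $n=3$, i.e.\ $\Delta f=-\frac R2 f$, gives
$$\int\overline{R}_{ij,k}\overset{\circ}{R}_{ij}f_k=\int\Big[\tfrac12|\widetilde{Ric}|^2|\nabla f|^2+\tfrac R4 f(1+f)|\widetilde{Ric}|^2\Big].$$

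\emph{Left-hand side, second term.} For $\int\overline{R}_{ik,j}\overset{\circ}{R}_{ij}f_k$, integrate by parts in $j$. The divergence-free property (\ref{sca-1}) kills $\overset{\circ}{R}_{ij,j}$. What remains is $-\int\overline{R}_{ik}\overset{\circ}{R}_{ij}f_{kj}$. Substituting the CPE equation (\ref{cpe-1-1}) turns this into
$$-\int(1+f)^2tr((\widetilde{Ric})^3)+\frac R6\int f(1+f)|\widetilde{Ric}|^2.$$
This is the step that produces the cubic term, and the sign bookkeeping here is where I would be most careful.

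\emph{Combining.} Equating both sides leaves terms of the form $\int f(1+f)|\widetilde{Ric}|^2$ with coefficient $\frac R4-\frac R6=\frac R{12}$. I rewrite them as $\int(1+f)^2|\widetilde{Ric}|^2$ using Proposition \ref{pr-3}, since $f(1+f)=(1+f)^2-(1+f)$ and $\int(1+f)|\widetilde{Ric}|^2=0$. This yields exactly the coefficients $\frac32$, $\frac R{12}$, $-2$, $1$ in the statement.

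\emph{Main obstacle.} The main obstacle is keeping the constants and signs straight. If the coefficient of $|\widetilde{Ric}|^2|\nabla f|^2$ does not come out as $\frac32$, I would recheck the sign convention in (\ref{ricci-2}) and the index placement of the contraction. I would also consider supplementing the identity with Propositions \ref{pr-1}--\ref{pr-2}, which allow trading $\int\widetilde{Ric}(\nabla f,\nabla f)$ against weighted $|\widetilde{Ric}|^2$ integrals, to absorb any leftover terms.
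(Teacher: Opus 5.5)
Your proposal is correct and is essentially the paper's argument. Integrating $\int\overline{R}_{ik,j}\overset{\circ}{R}_{ij}f_k$ by parts is the same as integrating the divergence of the paper's field $Z=\sum\overset{\circ}{R}_{ij}\overline{R}_{jk}f_ke_i$ after substituting (\ref{ricci-2}), and your constants check out ($\frac12+1=\frac32$, $\frac R4-\frac R6=\frac R{12}$, then Proposition \ref{pr-3} to replace $f(1+f)$ by $(1+f)^2$), so the fallback to Propositions \ref{pr-1}--\ref{pr-2} is not needed.
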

\begin{proof}
Define the vector field:
$$Z=\sum_{ij}\overset{\circ}{R}_{ij}\bar{R}_{jk}f_ke_i.$$
Using (\ref{sca-1}), and taking the divergence of $Z$,
\begin{equation*}
\begin{split}
div(Z)&=\sum_{ij}\Big[\overset{\circ}{R}_{ij,i}\bar{R}_{jk}f_k+\overset{\circ}{R}_{ij}\bar{R}_{jk,i}f_k+\overset{\circ}{R}_{ij}\bar{R}_{jk}f_{ki}\Big]\\
&=\sum_{ij}\Big[\overset{\circ}{R}_{ij}(\bar{R}_{ij,k}+\bar{R}_{jk,i}-\bar{R}_{ij,k})f_k+\overset{\circ}{R}_{ij}\bar{R}_{jk}f_{ki}\Big].\\
\end{split}
\end{equation*}
Using (\ref{ricci-2}), the first term
\begin{equation*}
\begin{split}
&\sum_{ij}\Big[\overset{\circ}{R}_{ij}(\bar{R}_{ij,k}+\bar{R}_{jk,i}-\bar{R}_{ij,k})f_k\Big]\\
&=\sum_{ij}\Big[\overset{\circ}{R}_{ij}(f_k\overset{\circ}{R}_{ij}+(1+f)\overset{\circ}{R}_{ij,k})f_k
+\overset{\circ}{R}_{ij}(\bar{R}_{jk,i}-\bar{R}_{ij,k})f_k\Big]\\
&=|\widetilde{Ric}|^2|\nabla f|^2+\sum_k\frac{1}{2}\nabla_k|\widetilde{Ric}|^2(1+f)f_k
+|\widetilde{Ric}|^2|\nabla f|^2-2\sum_{ijk}\overset{\circ}{R}_{ij}\overset{\circ}{R}_{jk}f_if_k.
\end{split}
\end{equation*}
Using the CPE equation (\ref{cpe-1-1}), the second term
\begin{equation*}
\sum_{ij}\Big[\overset{\circ}{R}_{ij}\bar{R}_{jk}f_{ki}\Big]=(1+f)^2tr((\widetilde{Ric})^3)-\frac{R}{6}f(1+f)|\widetilde{Ric}|^2.
\end{equation*}

Using
\begin{equation*}
\begin{split}
&\int_{M^n}\sum_k\nabla_k|\widetilde{Ric}|^2(1+f)f_kdv_g=-\int_{M^n}\sum_k|\widetilde{Ric}|^2((1+f)f_k)_kdv_g\\
&=-\int_{M^n}|\widetilde{Ric}|^2(|\nabla f|^2+(1+f)\triangle f)dv_g=-\int_{M^n}|\widetilde{Ric}|^2(|\nabla f|^2-\frac{R}{2}f(1+f))dv_g
\end{split}
\end{equation*}
and
\begin{equation*}
\begin{split}
&\int_{M^n}f(1+f)|\widetilde{Ric}|^2|dv_g=\int_{M^n}f(1+f)|\widetilde{Ric}|^2|dv_g+\int_{M^n}(1+f)|\widetilde{Ric}|^2|dv_g\\
&=\int_{M^n}(1+f)^2|\widetilde{Ric}|^2|dv_g,
\end{split}
\end{equation*}
we get
\begin{equation}\label{in-4-1}
\begin{split}
&0=\int_{M^n}div(Z)dv_g\\
&=\int_{M^n}\Big[\frac{3}{2}|\widetilde{Ric}|^2|\nabla f|^2-2\sum_{ijk}\overset{\circ}{R}_{ij}\overset{\circ}{R}_{jk}f_if_k
+\frac{R}{12}(1+f)^2|\widetilde{Ric}|^2+(1+f)^2tr((\widetilde{Ric})^3)\Big]dv_g.
\end{split}
\end{equation}
Thus Proposition \ref{pr-4} is proved.
\end{proof}
In order to prove  Theorem \ref{th-3-1} we need the following Lemma.
\begin{lemma}\label{le}
Let $a_1,a_2,\cdots,a_n$  be real numbers satisfying $\sum_{i=1}^{n}a_i=0$.  Then
$$\frac{-(n-2)}{\sqrt{n(n-1)}}\Big(\sum_{i=1}^{n}a_i^2\Big)^{\frac{3}{2}}\leq\sum_{i=1}^{n}a_i^3\leq
\frac{(n-2)}{\sqrt{n(n-1)}}\Big(\sum_{i=1}^{n}a_i^2\Big)^{\frac{3}{2}},$$
and here equality holds  if and only if $(n-1)$ of the $a_i$ are equal.
\end{lemma}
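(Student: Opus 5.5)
We prove the statement, which is the classical Okumura-type inequality, by constrained optimization on a compact set. By homogeneity (both sides scale as $t^3$ under $a_i\mapsto ta_i$) it suffices to treat the case $\sum_i a_i^2=1$; the case where all $a_i=0$ is trivial, and equality then holds with all $a_i$ equal. So we study $F(a)=\sum_i a_i^3$ on the compact set
$$S=\Big\{a\in\mathbb{R}^n:\ \sum_i a_i=0,\ \sum_i a_i^2=1\Big\}.$$
This is a smooth $(n-2)$-sphere, since the two gradients $(1,\dots,1)$ and $2a$ are independent on $S$. Hence $F$ attains its maximum and minimum on $S$, and these occur at critical points. Because $F(-a)=-F(a)$ and $S$ is symmetric, the lower bound follows from the upper bound. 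It therefore suffices to show that $\max_S F=\frac{n-2}{\sqrt{n(n-1)}}$ and to identify the maximizers.

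\textbf{Lagrange multipliers.} At a critical point there are $\lambda,\mu$ with $3a_i^2=\lambda+2\mu a_i$ for every $i$. Thus each $a_i$ is a root of one fixed quadratic, so the $a_i$ take at most two distinct values, say $x$ (with multiplicity $p$) and $y$ (with multiplicity $n-p$), where $1\le p\le n-1$. The two constraints give $px+(n-p)y=0$ and $px^2+(n-p)y^2=1$. Solving,
$$x=\sqrt{\frac{n-p}{np}},\qquad y=-\sqrt{\frac{p}{n(n-p)}}$$
up to an overall sign. Then
$$F=px^3+(n-p)y^3=\pm\frac{n-2p}{\sqrt{np(n-p)}}.$$

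\textbf{Maximizing over $p$.} Set $g(p)=\frac{n-2p}{\sqrt{p(n-p)}}$. By the symmetry $p\leftrightarrow n-p$ (which flips the sign), it is enough to show $|g(p)|\le g(1)$ for $1\le p\le n-1$. One checks that $g$ is strictly decreasing in $p$. The numerator is decreasing. For the denominator, on $p\le n/2$ the denominator is increasing and the numerator is nonnegative, so $g$ decreases there and $0\le g(p)\le g(1)$. On $p\ge n/2$ we use $g(p)=-g(n-p)$. Hence $|F|\le g(1)/\sqrt n=\frac{n-2}{\sqrt{n(n-1)}}$, with equality exactly when $p=1$ or $p=n-1$.

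\textbf{Equality case.} In both $p=1$ and $p=n-1$, $n-1$ of the $a_i$ are equal. Conversely, if $n-1$ of the $a_i$ are equal, the constraint $\sum_i a_i=0$ forces the configuration $(\pm(n-1)c,\mp c,\dots,\mp c)$. A direct computation shows this attains $\pm\frac{n-2}{\sqrt{n(n-1)}}(\sum_i a_i^2)^{3/2}$, which is one of the two bounds.

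\textbf{Main obstacle.} The only delicate point is the monotonicity and sign bookkeeping over $p$, including the case where $p(n-p)$ is not minimal. This is handled by the symmetry reduction to $p\le n/2$, where numerator and denominator move in favorable directions.
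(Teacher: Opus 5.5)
Your proof is correct and follows the route the paper indicates (it omits the details and cites Okumura): Lagrange multipliers for $\sum_i a_i^3$ under $\sum_i a_i=0$ and $\sum_i a_i^2=\beta$, which you normalize to $\beta=1$ by homogeneity. This forces critical points to be two-valued, and you then compare the values $\pm\frac{n-2p}{\sqrt{np(n-p)}}$ over the multiplicity $p$. The one step you leave implicit in the equality case is that a point attaining either bound is an extremum of $F$ on $S$, hence a critical point, hence has $p\in\{1,n-1\}$; this follows immediately from what you proved.
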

It is easy to prove Lemma \ref{le} by using the method of Lagrange's multipliers
to find the critical points of $\sum_{i=1}^{n}a_i^3$
under  the conditions $\sum_{i=1}^{n}a_i=0$ and $\sum_{i=1}^{n}a_i^2=\beta$.
We omit it here( Or see \cite{okm}).

Now we prove Theorem \ref{th-3-1}. Now we can choose the local orthonormal frame  $\{e_1,e_2,e_3\}$ such that
$$\overset{\circ}{R}_{ij}=a_i\delta_{ij},~~~ i.e., (\overset{\circ}{R}_{ij})=diag(a_1, a_2, a_3).$$
We can assume that $a_1\leq a_2\leq a_3.$ Let $a=\max(a_1^2, a_2^2, a_3^2)$, then $a=a_1^2$ or $a=a_3^2$.
We assume that $a=a_3^2$, then
$$\sum_{ijk}\overset{\circ}{R}_{ij}\overset{\circ}{R}_{jk}f_if_k\leq a_3^2|\nabla f|^2,~~|\widetilde{Ric}|^2=a_1^2+a_2^2+a_3^2.$$
Thus
\begin{equation}\label{cpe-4-2}
\begin{split}
&\frac{3}{2}|\widetilde{Ric}|^2|\nabla f|^2-2\sum_{ijk}\overset{\circ}{R}_{ij}\overset{\circ}{R}_{jk}f_if_k
\geq\Big[\frac{3}{2}(a_1^2+a_2^2)-\frac{1}{2}a_3^2\Big]|\nabla f|^2\\
&=\Big[\frac{3}{2}(a_1^2+a_2^2)-\frac{1}{2}(a_1+a_2)^2\Big]|\nabla f|^2=\Big[a_1^2+a_2^2-a_1a_2\Big]|\nabla f|^2\\
&=\Big[\frac{1}{2}(a_1^2+a_2^2)+\frac{1}{2}(a_1-a_2)^2\Big]|\nabla f|^2\geq 0,
\end{split}
\end{equation}
Here we use $tr(\widetilde{Ric})=a_1+a_2+a_3=0$.

Now If $tr((\widetilde{Ric})^3)\geq \frac{-R}{12}|\widetilde{Ric}|^2$, then
$$\int_{M^n}\Big[\frac{R}{12}(1+f)^2|\widetilde{Ric}|^2+(1+f)^2tr((\widetilde{Ric})^3)\Big]dv_g\geq 0.$$
then from (\ref{in-4-1}) we can obtain
$|\widetilde{Ric}|^2=0$. Thus we have $\widetilde{Ric}=0$, and  Theorem \ref{th-3-1} is proved.

We use Lemma \ref{le} and Theorem \ref{th-3-1} to prove Theorem \ref{th-4-1}.
From Lemma \ref{le}, we obtain
\begin{equation}\label{niden}
tr((\widetilde{Ric})^3)\geq \frac{-|\widetilde{Ric}|}{\sqrt{6}}|\widetilde{Ric}|^2.
\end{equation}
Now if $|\widetilde{Ric}|^2\leq \frac{R^2}{24},$, then
\begin{equation}\label{niden}
tr((\widetilde{Ric})^3)\geq \frac{-|\widetilde{Ric}|}{\sqrt{12}}|\widetilde{Ric}|^2\geq \frac{-R}{12}|\widetilde{Ric}|^2.
\end{equation}
Combining Theorem \ref{th-3-1}, we finish the proof of Theorem \ref{th-4-1}.

To prove Theorem \ref{th-3-2}, we need the following proposition,
\begin{PROPOSITION}\label{pr-4}
Let $(M^3, g, f)$ be a $3$-dimensional closed, oriented, connected CPE  metric, then
\begin{equation}\label{ricci-3}
\begin{split}
&\int_{M^3}\Big[\frac{7}{2}|\widetilde{Ric}|^2|\nabla f|^2+\sum_{ijk}(1+f)^2|\overset{\circ}{R}_{ij,k}|^2
+\frac{5R}{4}(1+f)^2|\widetilde{Ric}|^2)\Big]dv_g\\
&=\int_{M^3}6\Big[\sum_{ijk}\overset{\circ}{R}_{ij}\overset{\circ}{R}_{jk}f_if_k
-(1+f)^2tr((\widetilde{Ric})^3\Big]dv_g.
\end{split}
\end{equation}
\end{PROPOSITION}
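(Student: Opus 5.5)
The identity involves $\sum|\overset{\circ}{R}_{ij,k}|^2$, so it should come from a Bochner-type computation: integrate the square of the antisymmetrized derivative (the Cotton-type tensor) and use (\ref{ricci-2}) to rewrite it in terms of $f$ and $\widetilde{Ric}$. I would use the weighted quantity $\overline{R}_{ij}=(1+f)\overset{\circ}{R}_{ij}$ and compute $\int_{M^3}\sum_{ijk}|\overline{R}_{ij,k}-\overline{R}_{ik,j}|^2dv_g$ in two ways.

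\textbf{First way: expand the square directly.} This gives
\[
2\sum_{ijk}|\overline{R}_{ij,k}|^2-2\sum_{ijk}\overline{R}_{ij,k}\overline{R}_{ik,j}.
\]
\begin{itemize}
\item Substituting $\overline{R}_{ij,k}=(1+f)\overset{\circ}{R}_{ij,k}+f_k\overset{\circ}{R}_{ij}$, the first sum becomes
\[
(1+f)^2|\nabla\widetilde{Ric}|^2+\tfrac12(1+f)\nabla_k|\widetilde{Ric}|^2 f_k+|\widetilde{Ric}|^2|\nabla f|^2 .
\]
The middle term is integrated by parts exactly as in the proof of the previous Proposition. That uses $\Delta f=-\frac{R}{2}f$ and (\ref{int-3}) to replace $\int f(1+f)|\widetilde{Ric}|^2$ by $\int(1+f)^2|\widetilde{Ric}|^2$.
\item For the cross term $\sum\overline{R}_{ij,k}\overline{R}_{ik,j}$, integrate by parts moving $\nabla_j$. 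This produces $\sum\overline{R}_{ij,kj}\overline{R}_{ik}$.
\item Commute derivatives with the Ricci identity, which yields curvature terms via (\ref{cur1-1}). Then use $\sum_j\overline{R}_{ij,j}=\sum_j f_j\overset{\circ}{R}_{ij}$, which follows from (\ref{sca-1}).
\item The commutator terms in dimension 3 give $(1+f)^2$ times a combination of $\tfrac{R}{2}|\widetilde{Ric}|^2$ and $tr(\widetilde{Ric}^3)$. The $3\,tr(\widetilde{Ric}^3)$ coefficient comes from $\sum R_{ijkl}$ contracted against $\overset{\circ}{R}\otimes\overset{\circ}{R}$.
\item The remaining divergence piece gives $-\int\sum(f_j\overset{\circ}{R}_{ij})_k\overline{R}_{ik}$-type terms. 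These reduce, using the CPE equation (\ref{cpe-1-1}) for $f_{jk}$, to $\sum\overset{\circ}{R}_{ij}\overset{\circ}{R}_{jk}f_if_k$, more $(1+f)^2tr(\widetilde{Ric}^3)$, and $\frac{R}{6}f(1+f)|\widetilde{Ric}|^2$ terms. The latter are again converted via (\ref{int-3}).
\end{itemize}

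\textbf{Second way: use the right-hand side of (\ref{ricci-2}).} Square it directly. Using tracelessness of $\overset{\circ}{R}$ and $n=3$, this gives pointwise
\[
2\,|\widetilde{Ric}|^2|\nabla f|^2+c\sum\overset{\circ}{R}_{ij}\overset{\circ}{R}_{jk}f_if_k
\]
for an explicit constant $c$. The terms are $2|\nabla f|^2|\widetilde{Ric}|^2$ from the $f_j\overset{\circ}{R}_{ik}$ part, a $\delta$-part of size $2\cdot 3|\overset{\circ}{R}\nabla f|^2$ minus its cross terms, and mixed terms.

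\textbf{Combining.} Equate the two expressions and collect the four types of terms: $|\widetilde{Ric}|^2|\nabla f|^2$, $(1+f)^2|\nabla\widetilde{Ric}|^2$, $(1+f)^2|\widetilde{Ric}|^2$ (with coefficient a multiple of $R$), and the pair $\sum\overset{\circ}{R}_{ij}\overset{\circ}{R}_{jk}f_if_k$, $(1+f)^2tr(\widetilde{Ric}^3)$. If the constants do not land exactly at $\frac72,1,\frac{5R}{4},6$, I will add a suitable multiple of the identity of the previous Proposition to fix the ratio between the $|\widetilde{Ric}|^2|\nabla f|^2$ terms and the other terms. That identity relates exactly these same five quantities, so this should match the coefficients.

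\textbf{Main obstacle.} The main difficulty is the bookkeeping in the cross term $\int\overline{R}_{ij,k}\overline{R}_{ik,j}$. The Ricci commutation in dimension 3 and the several integrations by parts must produce precisely the coefficients $\frac{5R}{4}$ and $6$. I would double-check these by testing the final identity for consistency with (\ref{int-1}) for $k=1$ and with (\ref{int-3}).
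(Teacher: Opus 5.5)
Your plan is sound and, once one slip is corrected, it proves the statement. It rests on the same Bochner identity as the paper but evaluates the pieces differently. The paper integrates $\frac12\Delta|\overline{R}|^2$ using $\Delta\overline{R}_{ij}=\sum_k\overline{R}_{ik,jk}+\sum_k\nabla_kT_{ijk}$, where $T_{ijk}=\overline{R}_{ij,k}-\overline{R}_{ik,j}$. After integration by parts this is exactly your identity $\int|T|^2=2\int|\nabla\overline{R}|^2-2\int\sum\overline{R}_{ij,k}\overline{R}_{ik,j}$.

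The paper differentiates the right side of (\ref{ricci-2}), expands everything pointwise, and then applies (\ref{cpe-4-1}) and (\ref{int-3}) to the resulting $f_k\nabla\overset{\circ}{R}$ terms. That lands directly on the coefficients $\frac72,\frac{5R}{4},6$ without using the first Proposition of Section 3.

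You instead square (\ref{ricci-2}) pointwise, which gives $|T|^2=2|\widetilde{Ric}|^2|\nabla f|^2-2\sum\overset{\circ}{R}_{ij}\overset{\circ}{R}_{jk}f_if_k$. After commuting derivatives, integrate $-\int\sum\overline{R}_{ij,jk}\overline{R}_{ik}$ by parts once more. This gives $\int\sum_i\big(\sum_j\overline{R}_{ij,j}\big)^2=\int\sum\overset{\circ}{R}_{ij}\overset{\circ}{R}_{jk}f_if_k$ directly. Do this rather than expanding with (\ref{cpe-1-1}), which also leaves a term $(1+f)f_j\overset{\circ}{R}_{ij,k}\overset{\circ}{R}_{ik}$ that you did not list.

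With $\int\sum\overline{R}_{ij,k}\overline{R}_{ik,j}=\int\big[\sum\overset{\circ}{R}_{ij}\overset{\circ}{R}_{jk}f_if_k-(1+f)^2\big(3tr((\widetilde{Ric})^3)+\frac R2|\widetilde{Ric}|^2\big)\big]$ and $\int|\nabla\overline{R}|^2=\int(1+f)^2\big[|\nabla\widetilde{Ric}|^2+\frac R2|\widetilde{Ric}|^2\big]$, the $\overset{\circ}{R}\overset{\circ}{R}\nabla f\nabla f$ terms cancel. You obtain the cleaner identity
\[
\int_{M^3}|\widetilde{Ric}|^2|\nabla f|^2dv_g=\int_{M^3}(1+f)^2\Big[|\nabla\widetilde{Ric}|^2+R|\widetilde{Ric}|^2+3tr((\widetilde{Ric})^3)\Big]dv_g .
\]
Adding exactly $3$ times (\ref{in-4-1}) turns this into (\ref{ricci-3}). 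Your route is computationally lighter and exposes a simpler identity, but it depends on the earlier Proposition, whereas the paper's computation is self-contained.

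Two cautions.

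First, $\sum_{ijk}|\overline{R}_{ij,k}|^2$ contains $(1+f)f_k\nabla_k|\widetilde{Ric}|^2$, not $\frac12(1+f)f_k\nabla_k|\widetilde{Ric}|^2$, because $\nabla_k|\widetilde{Ric}|^2=2\sum\overset{\circ}{R}_{ij}\overset{\circ}{R}_{ij,k}$. With your factor $\frac12$, the result is not congruent to (\ref{ricci-3}) modulo (\ref{in-4-1}). The $tr$ terms and the $\overset{\circ}{R}\overset{\circ}{R}\nabla f\nabla f$ terms force the multiplier $3$, but then the $|\widetilde{Ric}|^2|\nabla f|^2$ coefficient comes out $4$ and the $(1+f)^2|\widetilde{Ric}|^2$ coefficient comes out $R$.

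Second, your justification for the correction step does not hold as stated. The identity (\ref{in-4-1}) does not involve $|\nabla\widetilde{Ric}|^2$. A single free multiplier can absorb a discrepancy in four coefficients only if that discrepancy is already proportional to (\ref{in-4-1}). Here it is, with multiplier exactly $3$, but this must be checked, not assumed.
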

\begin{proof}
Using Ricci identity, (\ref{cpe-4}) and (\ref{ricci-2}), we calculate the following equation,
\begin{equation*}
\begin{split}
\triangle(\overline{R}_{ij})&:=\sum_k\overline{R}_{ij,kk}=\sum_k\Big[\overline{R}_{ik,j}+\overline{R}_{ij,k}-\overline{R}_{ik,j}\Big]_k\\
&=\sum_k\overline{R}_{ik,jk}+\sum_k\Big[\overline{R}_{ij,k}-\overline{R}_{ik,j}\Big]_k\\
&=\sum_{km}\Big[\overline{R}_{ik,kj}+\overline{R}_{mk}R_{mijk}+\overline{R}_{im}R_{mkjk}\Big]\\
&+\sum_{km}\Big[f_m\overset{\circ}{R}_{mj}\delta_{ik}-f_m\overset{\circ}{R}_{mk}\delta_{ij}+f_j\overset{\circ}{R}_{ik}-f_k\overset{\circ}{R}_{ij}\Big]_k\\
&=\sum_{k}6(1+f)\overset{\circ}{R}_{im}\overset{\circ}{R}_{jm}-2(1+f)|\widetilde{Ric}|^2\delta_{ij}\\
&+\frac{R}{2}(1+f)\overset{\circ}{R}_{ij}+\sum_kf_k(\overset{\circ}{R}_{ik,j}+\overset{\circ}{R}_{jk,i}-\overset{\circ}{R}_{ij,k}),
\end{split}
\end{equation*}
Here we use $\sum_j\overline{R}_{ij,j}=\sum_jf_j\overset{\circ}{R}_{ij}$.
Using
\begin{equation*}
\begin{split}
&\int_{M^3}\sum_k\nabla_k|\widetilde{Ric}|^2(1+f)f_kdv_g=-\int_{M^3}\sum_k|\widetilde{Ric}|^2((1+f)f_k)_kdv_g\\
&=-\int_{M^3}|\widetilde{Ric}|^2(|\nabla f|^2+(1+f)\triangle f)dv_g=-\int_{M^3}|\widetilde{Ric}|^2(|\nabla f|^2-\frac{R}{2}f(1+f))dv_g
\end{split}
\end{equation*}
and
\begin{equation*}
\begin{split}
&\int_{M^3}f(1+f)|\widetilde{Ric}|^2|dv_g=\int_{M^3}f(1+f)|\widetilde{Ric}|^2|dv_g+\int_{M^3}(1+f)|\widetilde{Ric}|^2|dv_g\\
&=\int_{M^3}(1+f)^2|\widetilde{Ric}|^2|dv_g,
\end{split}
\end{equation*}
 we can calculate the equations
\begin{equation*}
\begin{split}
&\int_{M^3}\sum_{ijk}|\overline{R}_{ij,k}|^2dv_g=\int_{M^3}\Big[\sum_{ijk}(f_k\overset{\circ}{R}_{ij}+\overline{R}_{ij,k})^2\Big]dv_g\\
&=\int_{M^3}\Big[|\widetilde{Ric}|^2|\nabla f|^2+(1+f)^2\sum_{ijk}|\overset{\circ}{R}_{ij,k}|^2+2\sum_k\nabla_k|\widetilde{Ric}|^2(1+f)f_k\Big]dv_g\\
&=\int_{M^3}\Big[|\widetilde{Ric}|^2|\nabla f|^2+(1+f)^2\sum_{ijk}|\overset{\circ}{R}_{ij,k}|^2-|\widetilde{Ric}|^2(|\nabla f|^2-\frac{R}{2}f(1+f))\Big]dv_g\\
&=\int_{M^3}\Big[(1+f)^2\sum_{ijk}|\overset{\circ}{R}_{ij,k}|^2+\frac{R}{2}f(1+f)|\widetilde{Ric}|^2\Big]dv_g,
\end{split}
\end{equation*}
and
\begin{equation*}
\begin{split}
&\int_{M^3}\Big[\sum_{ijk}\overline{R}_{ij}f_k(\overset{\circ}{R}_{ik,j}+\overset{\circ}{R}_{jk,i}-\overset{\circ}{R}_{ij,k})\Big]dv_g\\
&=\int_{M^3}\Big[\sum_{ijk}\overset{\circ}{R}_{ij}f_k\Big((1+f)\overset{\circ}{R}_{ik,j}
+(1+f)\overset{\circ}{R}_{jk,i}-(1+f)\overset{\circ}{R}_{ij,k}\Big)\Big]dv_g\\
&=\int_{M^3}\Big[\sum_{ijk}\overset{\circ}{R}_{ij}f_k\Big((1+f)[\overset{\circ}{R}_{ik,j}-\overset{\circ}{R}_{ij,k}]
+(1+f)[\overset{\circ}{R}_{jk,i}-\overset{\circ}{R}_{ij,k}]+(1+f)\overset{\circ}{R}_{ij,k}\Big)\Big]dv_g\\
&=\int_{M^3}\Big[\frac{7}{2}|\widetilde{Ric}|^2|\nabla f|^2+\frac{R}{4}(1+f)^2|\widetilde{Ric}|^2-6\sum_{ijk}
\overset{\circ}{R}_{ij}\overset{\circ}{R}_{jk}f_if_k\Big]dv_g.
\end{split}
\end{equation*}

Thus we can obtain the following equation,
\begin{equation*}
\begin{split}
&0=\int_{M^3}\Big[\frac{1}{2}\triangle((\overline{R}_{ij})^2\Big]dv_g
=\int_{M^3}\Big[\sum_{ijk}|\overline{R}_{ij,k}|^2+\sum_{ij}\overline{R}_{ij}\triangle(\overline{R}_{ij})\Big]dv_g\\
&=\int_{M^3}\Big[\frac{7}{2}|\widetilde{Ric}|^2|\nabla f|^2+\sum_{ijk}(1+f)^2|\overset{\circ}{R}_{ij,k}|^2
+\frac{5R}{4}(1+f)^2|\widetilde{Ric}|^2+6(1+f)^2tr((\widetilde{Ric})^3)\\
&-6\sum_{ijk}\overset{\circ}{R}_{ij}\overset{\circ}{R}_{jk}f_if_k
\Big]dv_g.
\end{split}
\end{equation*}
Thus Proposition \ref{pr-4} is proved.
\end{proof}

Next we prove Theorem \ref{th-3-2}. We can choose the local orthonormal frame  $\{e_1,e_2,e_3\}$ such that
$$\overset{\circ}{R}_{ij}=a_i\delta_{ij},~~~ i.e., (\overset{\circ}{R}_{ij})=diag(a_1, a_2, a_3).$$
we can assume that $a_1\leq a_2\leq a_3.$ Let $a=\max(a_1^2, a_2^2,  a_3^2)$, then $a=a_1^2$ or $a=a_3^2$.
We assume that $a=a_3^2$, then
$$\sum_{ijk}\overset{\circ}{R}_{ij}\overset{\circ}{R}_{jk}f_if_k\leq a_3^2|\nabla f|^2,~~|\widetilde{Ric}|^2=\sum_ma_m^2.$$
Thus
\begin{equation}\label{cpe-4-2}
\begin{split}
&\frac{7}{2}|\widetilde{Ric}|^2|\nabla f|^2-6\sum_{ijk}\overset{\circ}{R}_{ij}\overset{\circ}{R}_{jk}f_if_k
\geq\Big[\frac{7}{2}(a_1^2+a_2^2+a_3^2)-6a_3^2\Big]|\nabla f|^2\\
&=\Big[\frac{7}{2}(a_1^2+a_2^2)-\frac{5}{2}(a_1+a_2)^2\Big]|\nabla f|^2=\Big[(a_1-a_2)^2-3a_1a_2\Big]|\nabla f|^2.\\
\end{split}
\end{equation}
Here we use $tr(\widetilde{Ric})=a_1+a_2+a_3=0$.

Since
$$tr((\widetilde{Ric})^3)=a_1^3+a_2^3+a_3^3=3a_1a_2a_3\leq 0$$
 and $a_1\leq a_2\leq a_3$, then we have
 $$a_1\leq 0,~~0\leq a_2\leq a_3.$$
So $a_1a_2\leq 0$ and
$$\Big[(a_1-a_2)^2-3a_1a_2\Big]|\nabla f|^2\geq 0.$$
Thus
\begin{equation}\label{int3-1}
\frac{7}{2}|\widetilde{Ric}|^2|\nabla f|^2-6\sum_{ijk}\overset{\circ}{R}_{ij}\overset{\circ}{R}_{jk}f_if_k\geq 0.
\end{equation}

Now if $$\frac{-5R}{24}|\widetilde{Ric}|^2\leq tr((\widetilde{Ric})^3)\leq 0,$$ then
\begin{equation}\label{int3-2}
\int_{M^3}\Big[\frac{5R}{4}(1+f)^2|\widetilde{Ric}|^2+6(1+f)^2tr((\widetilde{Ric})^3)
\Big]dv_g\geq 0.
\end{equation}
Combining (\ref{ricci-3}), (\ref{int3-1}) and (\ref{int3-2}), we obtain that $\widetilde{Ric}=0$. Thus we finish the proof of Theorem \ref{th-3-2}.

There  are some special properties of traceless tensor on $3$-dimensional manifold, as shown in the following equations,
\begin{equation*}
\begin{split}
&tr((\widetilde{Ric})^4)=\frac{1}{2}|\widetilde{Ric}|^4, ~tr((\widetilde{Ric})^5)=\frac{5}{6}|\widetilde{Ric}|^2tr((\widetilde{Ric})^3),\\
&tr((\widetilde{Ric})^6)=\frac{1}{4}|\widetilde{Ric}|^6+\frac{1}{3}\Big(tr((\widetilde{Ric})^3)\Big)^2.
\end{split}
\end{equation*}
Below, we establish three interesting geometric integral identities on CPE manifolds. Now we define the vector fields
\begin{equation*}
\begin{split}
&X=\sum_{ijkl}\overset{\circ}{R}_{ij}\overset{\circ}{R}_{jk}\overline{R}_{kl}f_le_k,\\
&Y=\sum_{ijklm}\overset{\circ}{R}_{ij}\overset{\circ}{R}_{jk}\overset{\circ}{R}_{kl}\overline{R}_{lm}f_me_l,~~
Z=\sum_{ijklmt}\overset{\circ}{R}_{ij}\overset{\circ}{R}_{jk}\overset{\circ}{R}_{kl}\overset{\circ}{R}_{lm}\overline{R}_{mt}f_te_m.
\end{split}
\end{equation*}
By calculating the divergence of these three vector fields  and utilizing the specificity of three-dimensional manifolds, we can obtain the following three computational identities on $3$-dimension CPE manifold.
\begin{equation}\label{int-5}
\int_{M^3}\Big[\frac{5}{2}|\widetilde{Ric}|^2\sum_{ij}\overset{\circ}{R}_{ij}f_if_j-
5\sum_{ijkl}\overset{\circ}{R}_{ij}\overset{\circ}{R}_{jk}\overset{\circ}{R}_{kl}f_if_l+\frac{5}{3}tr((\widetilde{Ric})^3|\nabla f|^2\Big]dv_g=0.
\end{equation}
\begin{equation}\label{int-6}
\begin{split}
\int_{M^3}\Big[4|\widetilde{Ric}|^2\sum_{ijk}\overset{\circ}{R}_{ij}\overset{\circ}{R}_{jk}f_if_k-
8\sum_{ijklm}\overset{\circ}{R}_{ij}\overset{\circ}{R}_{jk}\overset{\circ}{R}_{kl}\overset{\circ}{R}_{lm}f_if_m\\
+\frac{8}{3}tr((\widetilde{Ric})^3|\sum_{ij}\overset{\circ}{R}_{ij}f_if_j\Big]dv_g=0.
\end{split}
\end{equation}
\begin{equation}\label{int-7}
\begin{split}
\int_{M^3}\Big[\frac{11}{2}|\widetilde{Ric}|^2\sum_{ijkl}\overset{\circ}{R}_{ij}\overset{\circ}{R}_{jk}\overset{\circ}{R}_{kl}f_if_l-
11\sum_{ijklmt}\overset{\circ}{R}_{ij}\overset{\circ}{R}_{jk}\overset{\circ}{R}_{kl}\overset{\circ}{R}_{lm}\overset{\circ}{R}_{mt}f_if_t\\
+\frac{11}{3}tr((\widetilde{Ric})^3|\sum_{ijk}\overset{\circ}{R}_{ij}\overset{\circ}{R}_{jk}f_if_k\Big]dv_g=0.
\end{split}
\end{equation}\\

These integral identities reflect certain rigidity of 3-dimensional CPE manifolds and are expected to be applied in the study of CPE metric.

{\bf Acknowledgements:} Authors  would like to thank Prof. Wei Yuan for some useful discussions.
Authors are partially supported by the grant No.12071028 of NSFC.


\begin{thebibliography}{11}
\bibitem{bal-0}H. Baltazar, A. Barros, R. Batista and E. Viana, \textit{On static manifolds and related critical spaces with zero radial Weyl curvature,} Monatsh. Math., Vol.191, (2020), 449-463.

\bibitem{Bal}H. Baltazar, \textit{On critical point equation of compact manifolds with zero radial Weyl curvature,} Geom. Dedic., Vol.202, (2019), 337-355.

\bibitem{B-R} A. Barros and E.Ribeiro Jr, \textit{Critical point equation on four-dimensional compact manifolds}, Math. Nachr., Vol.287, (2013), 1618-1623.

\bibitem{bal-1}H. Baltazar and E. Ribeiro Jr, \textit{Remarks on critical metrics of the scalar curvature and volume functionals on compact manifolds with boundary,}
Pacific J. Math., Vol.297, (2018),  29-45.

\bibitem{Besse} A.L.Besse, \textit{Einstein manifolds}, Springer-Verlag, New York, 1987.

\bibitem{F_Y}Y. Fang and W. Yuan, \textit{Brown-York mass and positive scalar curvature II-Besse's conjecture and related problems,} Ann. Global Anal. Geom., Vol.56, (2019), 1-15.

\bibitem{He} H. Y. He, \textit{Critical metrics of the volume functional on three-dimensional manifolds}, Math. Nachr., Vol.296, (2023) 2838-2849.

\bibitem{C-H} J. Chang and S. Hwang, \textit{Critical points and warped product metrics}, Bull. Korean Math. Soc., Vol.41, (2004), 117-123.

\bibitem{C-H-Y_1} J. Chang, S. Hwang, and G. Yun, \textit{Critical point equation of the total scalar curvature functional}, Bull. Korean Math. Soc., Vol.49, (2012), 655-667.

\bibitem{C-H-Y_2} J .Chang, S. Hwang, and G. Yun, \textit{Rigidity of the critical point equation}, Math. Nachr., Vol.283, (2010), 846-853.

\bibitem{C-H-Y_3-1} J. Chang, S. Hwang, and G. Yun, \textit{Total Scalar Curvature and Harmonic Curvature}, Taiwanese J. Math., Vol.18, (2014), 1439-1458.

\bibitem{C-H-Y_3}J. Chang, S. Hwang, and G. Yun, \textit{Erratum to: total scalar curvature and harmonic curvature,} Taiwanese J. Math., Vol.20, (2016), 699-703.


\bibitem{Hwang_1} S. Hwang, \textit{Critical points of the scalar curvature functional on the space of metrics of constant scalar curvature}, manuscripta math., Vol.103, (2000), 135-142.

\bibitem{Hwang_2} S. Hwang, \textit{The criticlal point equation on a three-dimensional compact manifold}, Proc. Amer. Math. Soc., Vol.131, (2003), 3221-3230.

\bibitem{Hwang_3} S. Hwang, \textit{Three dimensional criticlal point of the total scalar curvature}, Bull. Korean Math. Soc., Vol.50, (2013), 867- 871.

\bibitem{Koiso} N. Koiso, \textit{A decomposition of the space of Riemannian metrics on a manifold}, Osaka J. Math., Vol.16, (1979), 423-429.

\bibitem{Lafontaine} J. Lafontaine, \textit{Sur la g\'eom\'etrie d'une g\'en\'eralisation de
l'\'equation diff\'erentielle d'Obata}, J. Math. Pures Appliqu\'ees, Vol.62, (1983) 63-72.

\bibitem{bl}B. Leandro, \textit{A note on critical point metrics of the total scalar curvature functional,} J. Math. Anal. Appl., Vol.424, (2015), 1544-1548.

\bibitem{Obata} M. Obata, \textit{Certain conditions for a Riemannian manifold to be isometric with a
sphere}, J. Math. Soc. Japan., Vol.14, (1962), 333-340.


\bibitem{Q-Y} J. Qing and W. Yuan, \textit{A note on static spaces and related problems}, J. Geom. Phys., Vol.74, (2013), 18-27.

\bibitem{okm}Okumura, Masafumi, \textit{Hypersurfaces and a pinching problem on the second fundamental tensor,} Amer. J. Math., Vol.96,
(1974) 207-213.


\end{thebibliography}
\end{document}